\numberwithin{equation}{section}
\newtheorem{conj}{Conjecture}
\newtheorem{theorem}[equation]{Theorem}
\newtheorem{definition}[equation]{Definition}
\newtheorem{proposition}[equation]{Proposition}
\newtheorem{cor}[equation]{Corollary}
\newtheorem{lemma}[equation]{Lemma}
\begin{document}
\title{Quantitative Convergence for Sparse Ergodic Averages in $L^1$}

\author{Ben Krause}
\address{
Department of Mathematics,
University of Bristol \\
Woodland Rd, Bristol BS8 1UG}
\email{ben.krause@bristol.ac.uk}

\author{Yu-Chen Sun}
\address{
Department of Mathematics,
University of Bristol \\
Woodland Rd, Bristol BS8 1UG}
\email{yuchensun93@163.com}

\date{\today}

\maketitle

\begin{abstract}
We provide a unified framework to proving pointwise convergence of sparse sequences, deterministic and random, at the $L^1(X)$ endpoint. Specifically, suppose that
\[ a_n \in \{ \lfloor n^c \rfloor, \min\{ k : \sum_{j \leq k} X_j = n\} \} \]
where $X_j$ are Bernoulli random variables with expectations $\mathbb{E} X_j = j^{-\alpha}$, and we restrict to $1 < c < 7/6, \ 0 < \alpha < 1/2$.

Then (almost surely) for any measure-preserving system, $(X,\mu,T)$, and any $f \in L^1(X)$, the ergodic averages
\[ \frac{1}{N} \sum_{n \leq N} T^{a_n} f \]
converge $\mu$-a.e. Moreover, our proof gives new quantitative estimates on the rate of convergence, using jump-counting/variation/oscillation technology, pioneered by Bourgain.

This improves on previous work of Urban-Zienkiewicz and Mirek, who established the above with $c = \frac{1001}{1000}, \ \frac{30}{29}$, respectively, and LaVictoire, who established the random result, all in a non-quantitative setting.

\end{abstract}

\section{Introduction}

The topic of this paper is quantitative convergence of ergodic averages. We will be concerned, in particular, with the issue of ergodic averages along sparse times at the $L^1(X)$ endpoint, a topic which grew out of a conjecture of Rosenblatt-Wierdl \cite[Conjecture 4.1]{RW}.

\begin{conj}
    Suppose that $\{a_n\}$ has zero Upper Banach Density:
    \[ \limsup_{|I| \to \infty \text{ an interval}} \frac{|I \cap \{ a_n \}|}{|I|} = 0.\]
Then for any probability space, $(X,\mu)$, equipped with an aperiodic, measure-preserving transformation $T:X \to X$, there exists $f \in L^1(X)$ so that
\[ \frac{1}{N} \sum_{n \leq N} T^{a_n} f, \; \; \; T^k f(x) := f(T^k x), \]
does \emph{not} converge almost everywhere.
    \end{conj}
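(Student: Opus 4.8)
The natural plan is to derive the conjecture from the failure of a maximal inequality, via a Banach-principle argument. Recall the Stein--Sawyer type principle suited to a single aperiodic measure-preserving transformation: if every $f\in L^1(X)$ satisfied $\sup_N \bigl|\tfrac1N\sum_{n\le N}T^{a_n}f\bigr|<\infty$ $\mu$-a.e., then the maximal operator $A^* f:=\sup_N \bigl|\tfrac1N\sum_{n\le N}T^{a_n}f\bigr|$ would obey a weak-type $(1,1)$ inequality. Contrapositively, to produce a divergent $f\in L^1(X)$ it suffices to refute any such bound; in every case where the conjecture is currently known one in fact establishes the stronger \emph{strong sweeping out} property --- $\limsup_N A_N 1_E=1$ and $\liminf_N A_N 1_E=0$ $\mu$-a.e.\ for sets $E$ of arbitrarily small measure --- which even yields a divergent $f\in L^\infty(X)$.

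The first step is transference. Aperiodicity is precisely the hypothesis that lets one invoke the Rokhlin lemma to build arbitrarily tall, thin towers, after which Conze's transference principle reduces matters to the model $(\Z,+)$: it suffices to construct, for every $\varepsilon>0$ and every length $L$, a set $E\subset\{1,\dots,L\}$ with $|E|\le\varepsilon L$ such that for most $x\in\{1,\dots,L\}$ the discrete averages $\tfrac1N\sum_{n\le N}1_E(x-a_n)$ come close to $1$ at some scale $N=N(x)$ and close to $0$ at a larger scale. The zero--Banach--density hypothesis enters here: along a suitable sequence of scales $N_k\to\infty$ the finite set $\{a_n:n\le N_k\}$ lies inside an interval of length $L_k$ with $N_k=o(L_k)$, so it is genuinely sparse inside its own range, leaving room for a small resonating set; a diagonalization over a rapidly increasing sequence $L_k$, with the sets patched onto nested towers, then converts a sequence of such $E$ into a single bad $f$.

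The second step --- the heart of the matter --- is the construction of $E$. One would try to imitate the known constructions: for lacunary $\{a_n\}$ (with $a_{n+1}/a_n\ge\lambda>1$) one exploits the resulting near-independence of scales (Bellow and collaborators), while for polynomials $\{n^d\}$ one uses equidistribution and arithmetic structure (Buczolich--Mauldin, building on Bourgain). The first thing to attempt is a random $E$ of density $\varepsilon$ in $\{1,\dots,L_k\}$, showing that with positive probability the translate $\{x+a_n:n\le N_k\}$ meets $E$ in a fixed proportion for most $x$, uniformly in $k$.

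The main obstacle is that zero upper Banach density is an extremely weak hypothesis, imposing essentially no structure on $\{a_n\}$: the scales at which the sequence is ``usably sparse'' need not align with the resonances of any fixed $E$, and an adversarial zero-density sequence can be made highly anti-resonant --- concentrated in sparse arithmetic progressions, or in lacunary-type blocks each of which equidistributes, so that neither a random $E$ nor a union of residue classes resonates at the right scales. Forcing one $E$, equivalently one $f\in L^1(X)$, to witness divergence at infinitely many scales for an \emph{arbitrary} such sequence seems to require genuinely new combinatorial information about zero-density sets, or a choice of $E$ intricately adapted to the given $\{a_n\}$ rather than a generic one --- which is exactly why this conjecture remains open in general, only the structured cases (lacunary sequences, polynomials $\{n^d\}$) having been settled.
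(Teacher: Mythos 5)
The proposal misreads the status of the statement. This is not a result the paper proves; it is Rosenblatt and Wierdl's conjecture, which the introduction immediately records as \emph{false}: ``This conjecture was disproven by Buczolich \cite{Buc}.'' Indeed the entire point of the paper, and of the preceding works of Urban--Zienkiewicz, Mirek, and LaVictoire that it extends, is to exhibit ever sparser sequences of zero upper Banach density --- $\lfloor n^c\rfloor$ for $1<c<8/7$, and random hitting times with growth $n^{1/(1-\alpha)}$ --- that \emph{are} universally $L^1$-good, directly contradicting the conjecture. So there is no paper proof for your argument to be compared against; you are attempting to prove something the paper cites as refuted and then strengthens the refutation of.

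Your instinct in the final paragraph is actually the right one: zero upper Banach density is far too weak a hypothesis to force divergence, and an adversarial $\{a_n\}$ can be arranged to be anti-resonant against any putative bad set $E$. But you draw the wrong conclusion from it. That heuristic is precisely \emph{why the conjecture is false}, not why it ``remains open'' --- Buczolich's construction, and subsequently the deterministic $\lfloor n^c\rfloor$ examples and LaVictoire's random hitting times, realize exactly the anti-resonance you anticipate and produce zero-density sequences for which the ergodic averages converge a.e.\ for every $f\in L^1$. You also conflate two distinct results of Buczolich: Buczolich--Mauldin proved \emph{divergence} for the squares (a structured, hence resonant, zero-density sequence), whereas Buczolich's solo paper is the \emph{counterexample} to the conjecture; treating both as evidence for the conjecture inverts the logic. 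The Rokhlin/Conze transference and Stein--Sawyer framework you outline are the correct standard tools for proving divergence when it holds, but no amount of cleverness along those lines can succeed in the generality claimed here, because the asserted conclusion is simply not true.
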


This conjecture was disproven by Buczolich \cite{Buc}; subsequently, Urban and Zienkiewicz \cite{UZ} proved that for any $\sigma$-finite measure-space $(X,\mu)$, equipped with a measure-preserving transformation, $T: X \to X$, and any $f \in L^1(X)$, the ergodic averages 
\[ \frac{1}{N} \sum_{n \leq N} T^{a_n} f, \; \; \; a_n = \lfloor n^c \rfloor, \ 1 < c < \frac{1001}{1000}, \]
converge almost everywhere, with the $c = 1$ case appearing as the classical Birkhoff Pointwise Ergodic Theorem \cite{BI}.

For brevity, for the remainder of this paper, we will refer to such $\sigma$-finite measure spaces $(X,\mu)$, equipped with measure-preserving transformations $T:X \to X$, as \emph{measure-preserving systems}. And, we will say that a sequence $\{ a_n \}$ is \emph{universally $L^1$-good} if for every measure-preserving system, $(X,\mu,T)$, and every $f \in L^1(X)$, the ergodic averages
\[ \frac{1}{N} \sum_{n \leq N} T^{a_n} f \]
converge $\mu$-a.e.

With this notation in mind, the results of \cite{UZ} were extended by Mirek \cite{M} to $\lfloor n^c \rfloor, \ 1 < c < 30/29 = 1.03\dots$ and certain perturbations of these sequences, see also \cite{D}. Before Mirek's work, LaVictoire \cite{L} used the probabilistic method to prove that, generically, sequences with density slightly greater than the squares are universally $L^1$-good, partially extending work of Bourgain \cite[\S 8]{B0} to the $L^1$-setting. More precisely, he proved that whenever $\{ X_n \}$ are independent Bernouilli Random Variables with
\[ \mathbb{E} X_n  = n^{-\alpha}, \ 0 < \alpha < 1/2,\]
and
\begin{align}\label{e:hit} a_n := \min\{ k : \sum_{j=1}^k X_j = n \}\end{align}
are random hitting times, which almost surely satisfy the approxtotic
\[ a_n \approx n^{\frac{1}{1-\alpha}},\]
by Chernoff's Inequality, Lemma \ref{l:Chern} below, see \cite[\S 5]{BOOK}, then almost surely, $\{ a_n \}$ are universally $L^1$-good. This work complemented the famous non-convergence result of Buczolich and Mauldin for ergodic averages along the squares at the $L^1$ endpoint \cite{BM}, later extended by LaVictoire to the set of primes and further monomials \cite{L1}; on the other hand, see \cite{Christ} for sparse (arithmetic) examples of sequences which are universally $L^1$-good. Indeed, a major theme of this line of inquiry is to establish convergence results for increasingly sparse sequences of times according to the absence/presence of arithmetic structure.

The goal of this paper, therefore, is two-fold: first, we extend the class of sparse deterministic sequences which are universally $L^1$-good; second, building off the approach of \cite{UZ}, we establish a unified approach to quantifying convergence of ergodic averages at the $L^1$ endpoint, which addresses LaVictoire's work in tandem with our deterministic results. To state our main results, we recall three ways to quantify oscillation, introduced to the pointwise ergodic-theoretic setting by Bourgain in \cite{B0,Bp,B2}.

\begin{definition}\label{d:def}
    For a sequence of scalars $\{ a_k \} \subset \mathbb{C}$ define the (greedy) jump-counting function at altitude $\epsilon > 0$,
    \begin{align} 
    N_\epsilon(a_k) := \sup\{ M &: \text{There exists } k_0 < k_1 < \dots < k_M \\
    & \qquad : |a_{k_n} - a_{k_{n-1}}| > \epsilon \text{ for each } 1 \leq n \leq M \}.
    \end{align}
And for each $0 < r < \infty$, define the \emph{$r$-variation} to be
\[ \mathcal{V}^r(a_k) := \sup \big( \sum_n |a_{k_n} - a_{k_{n-1}}|^r \big)^{1/r} \]
where the supremum runs over all finite increasing subsequences; we define
\[ \mathcal{V}^{\infty}(a_k) := \sup_{n \neq m} |a_n - a_m|\]
to be the \emph{diameter} of the sequence. Finally, for an increasing sequence $\{ M_j \} \subset \mathbb{N}$, we define the \emph{oscillation operator},
\[ \mathcal{O}_{\{M_j\}}(a_k) := \big( \sum_j \max_{M_j \leq k \leq M_{j+1}} |a_k - a_{M_j}|^2 \big)^{1/2}. \]
\end{definition}
To emphasize the utility of the above operators in quantifying pointwise convergence phenomena, note that the statement
\[ N_{\epsilon}(a_k) < \infty\]
for each $\epsilon >0$ is equivalent to the statement that $\{ a_k \}$ converges, as is the estimate
\[ \sup_{\{ M_j \}} \mathcal{O}_{\{ M_j\}_{j \leq J}}(a_k) = o_{J \to \infty}(J^{1/2}); \]
and the variation operators, classically used to quantify convergence in the martingale context \cite{LE}, neatly quantify convergence in that
\[ \sup_\epsilon \epsilon N_{\epsilon}(a_k)^{1/r} \leq \mathcal{V}^r(a_k), \; \; \; \sup_{\{ M_j \}} \mathcal{O}_{\{ M_j\}_{j \leq J}}(a_k) \leq J^{\max\{1/2-1/r,0\}} \cdot \mathcal{V}^r(a_k).\]

For a sequence of functions, $\{ f_N \}$, we define the jump-counting operator, the $r$-variation operator, and the oscillation operator, respectively, as 
\begin{align}\label{e:osc} N_{\epsilon}(f_N)(x) := N_{\epsilon}(f_N(x)), \; \mathcal{V}^r(f_N)(x) := \mathcal{V}^r(f_N(x)), \; 
\mathcal{O}_{\{M_j\}}(f_N)(x):=  
\mathcal{O}_{\{M_j\}}(f_N(x)).\end{align}

We now state our main results, which concern sequences 
\[ \mathbb{D} \subset \mathbb{N}\] 
which are $\lambda$-lacunary sequences, i.e. $N'/N \geq \lambda > 1$ for all $N < N' \in \mathbb{D}$.

\medskip

Our first result concerns pointwise convergence of our deterministic sequences:
\begin{theorem}\label{t:maindet}
Let $\mathbb{D}$ be $\lambda$-lacunary. Then for any $\epsilon > 0$, any $r > 2$, and any increasing sequence $\{ M_j \} \subset \mathbb{D}$, there exists an absolute constant $C_\lambda(c) < \infty$ so that the following estimate holds uniformly for each measure-preserving system,
\begin{align}
    &\| \epsilon N_\epsilon( \frac{1}{N} \sum_{n \leq N} T^{a_n} f : N \in \mathbb{D})^{1/2} \|_{L^{1,\infty}(X)} + \frac{r-2}{r} \| \mathcal{V}^r(\frac{1}{N} \sum_{n \leq N} T^{a_n} f : N \in \mathbb{D}) \|_{L^{1,\infty}(X)} \\
    & \qquad +
    \| \mathcal{O}_{\{M_k\}}(\frac{1}{N} \sum_{n \leq N} T^{a_n} f : N \in \mathbb{D}) \|_{L^{1,\infty}(X)}  \leq C_\lambda(c) \| f \|_{L^1(X)},
\end{align}
whenever $a_n = \lfloor n^c \rfloor, 1 < c < 7/6$.
\end{theorem}

Our second result addresses the random case:
\begin{theorem}\label{t:mainran}
Let $\mathbb{D}$ be $\lambda$-lacunary. Then for any $\epsilon > 0$, any $r > 2$, and any increasing sequence $\{ M_j \} \subset \mathbb{D}$, there exists an absolute constant $C_\lambda(\omega) < \infty$ so that the following estimate holds uniformly for each measure-preserving system,
\begin{align}
    &\| \epsilon N_\epsilon( \frac{1}{N} \sum_{n \leq N} T^{a_n} f : N \in \mathbb{D})^{1/2} \|_{L^{1,\infty}(X)} + \frac{r-2}{r} \| \mathcal{V}^r(\frac{1}{N} \sum_{n \leq N} T^{a_n} f : N \in \mathbb{D}) \|_{L^{1,\infty}(X)} \\
    & \qquad +
    \| \mathcal{O}_{\{M_k\}}(\frac{1}{N} \sum_{n \leq N} T^{a_n} f : N \in \mathbb{D}) \|_{L^{1,\infty}(X)}  \leq C_\lambda(\omega) \| f \|_{L^1(X)},
\end{align}
whenever $a_n = a_n(\omega)$ is as in \eqref{e:hit} with $0 < \alpha < 1/2$, away from a set of zero probability.
\end{theorem}

As a corollary, we establish the following pointwise ergodic theorems at the $L^1$ endpoint.

\begin{theorem}[Non-Quantitative Formulation]\label{t:cor}
    For any measure-preserving system, $(X,\mu,T)$, and any $f \in L^1(X)$ (almost surely)
    \[ \frac{1}{N} \sum_{n \leq N} T^{a_n} f \]
    converges $\mu$-a.e. whenever $a_n$ are as in Theorem \ref{t:maindet} or \ref{t:mainran}.
\end{theorem}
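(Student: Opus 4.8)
The plan is to deduce Theorem \ref{t:cor} from Theorem \ref{t:main} by the standard principle that quantitative bounds on oscillation or jump-counting operators upgrade to almost-everywhere convergence, while handling the reduction from a general $\lambda$-lacunary sequence $\mathbb{D}$ to the full sequence $N \in \mathbb{N}$. First I would observe that, for a fixed $f \in L^1(X)$, Theorem \ref{t:main} gives $\|\epsilon N_\epsilon(A_N f : N \in \mathbb{D})^{1/2}\|_{L^{1,\infty}} \leq C_\lambda \|f\|_{L^1}$, where I abbreviate $A_N f := \frac1N \sum_{n \leq N} T^{a_n} f$. By Chebyshev in the weak space this forces $N_\epsilon(A_N f(x) : N \in \mathbb{D}) < \infty$ for $\mu$-a.e.\ $x$, for each fixed $\epsilon > 0$; intersecting over a countable sequence $\epsilon \downarrow 0$ shows that $\{A_N f(x)\}_{N \in \mathbb{D}}$ is Cauchy, hence convergent, for $\mu$-a.e.\ $x$. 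This handles convergence along the lacunary subsequence $\mathbb{D}$.

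Next I would pass from $\mathbb{D}$ to all of $\mathbb{N}$. Fix $\lambda = 2$, say, and set $\mathbb{D} = \{2^j\}$. Between consecutive dyadic scales $2^j \leq N < 2^{j+1}$ one controls $\sup_{2^j \leq N < 2^{j+1}} |A_N f - A_{2^j} f|$ — this is exactly the type of short-variation/oscillation increment that the oscillation operator $\mathcal{O}_{\{M_k\}}$ with $M_k = 2^k$ bounds, since $\mathcal{O}_{\{2^k\}}(A_N f)(x) = \big(\sum_j \max_{2^j \leq N \leq 2^{j+1}} |A_N f(x) - A_{2^j}f(x)|^2\big)^{1/2}$. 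The weak-$(1,1)$ bound on $\mathcal{O}_{\{2^k\}}$ from Theorem \ref{t:main} implies that the summands $\max_{2^j \leq N \leq 2^{j+1}}|A_N f(x) - A_{2^j}f(x)|$ tend to $0$ as $j \to \infty$ for $\mu$-a.e.\ $x$ (a convergent series of nonnegative terms has terms going to zero). Combining this with convergence of $\{A_{2^j}f(x)\}$ along the dyadic sequence yields that $\{A_N f(x)\}_{N \in \mathbb{N}}$ is Cauchy for $\mu$-a.e.\ $x$, hence converges.

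For the random case, the only change is that Theorem \ref{t:main} holds off a null set of probability (in the Bernoulli probability space) and with $C_\lambda$ depending on the realization; since a countable intersection of probability-one events still has probability one, we may fix a realization of $\{X_j\}$ for which all the required weak-type inequalities hold with some finite constant, and then the deterministic argument above applies verbatim. Thus the conclusion holds almost surely. I should also note that $f \in L^1(X)$ is fixed throughout, so there is no issue of a null set depending on $f$ — the statement of Theorem \ref{t:cor} is for each fixed $f$.

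The only genuinely delicate point is the reduction across dyadic blocks: one must be sure that the oscillation operator as defined, with $M_j = 2^j$, truly dominates the gaps $\sup_{2^j \le N < 2^{j+1}}|A_N f - A_{2^j}f|$ and that its finiteness a.e.\ forces these gaps to decay — but this is immediate from the definition of $\mathcal{O}_{\{M_j\}}$ once one checks that $\{2^j\} \subset \mathbb{D}$ is itself $\lambda$-lacunary, which it is. Everything else is the routine Chebyshev-plus-countable-intersection packaging of quantitative estimates into qualitative convergence, so I expect no real obstacle here; the substance of the paper is entirely in Theorem \ref{t:main}.
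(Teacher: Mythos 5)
Your step 1 (using the jump-counting bound on $\mathbb{D}$ plus Chebyshev and a countable intersection over $\epsilon \downarrow 0$ to get a.e.\ convergence of $\{A_N f(x)\}_{N\in\mathbb{D}}$) is correct and cleaner than the paper's own packaging (the paper instead develops a quantitative $C_{\tau,H}$/Calder\'{o}n transference machinery in Lemmas \ref{l:QC} and \ref{l:trans} and Proposition \ref{p:pointwise}). Your treatment of the random realization by intersecting countably many probability-one events is also fine.

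However, your step 2 contains a genuine gap: you misread the scope of the oscillation operator in Theorem \ref{t:main}. The estimate bounds
\[
\mathcal{O}_{\{M_k\}}\bigl(A_N f : N \in \mathbb{D}\bigr),
\]
i.e.\ the inner maximum in the definition of $\mathcal{O}$ runs only over $N \in \mathbb{D}$ with $M_j \le N \le M_{j+1}$. If you take $\mathbb{D} = \{2^j\}$ and $M_j = 2^j$, as you propose, each inner maximum degenerates to the single difference $|A_{2^{j+1}}f - A_{2^j}f|$, so $\mathcal{O}_{\{2^j\}}$ is just a dyadic square function and gives no information whatsoever about $A_N f$ for $N \notin \{2^j\}$. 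Your assertion that the oscillation operator ``truly dominates the gaps $\sup_{2^j \le N < 2^{j+1}}|A_N f - A_{2^j}f|$'' is simply false for the operator as it appears in the theorem; that bound would require the index $N$ in the inner $\max$ to range over all integers, which Theorem \ref{t:main} does not provide.

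The missing ingredient is the one the paper uses in Lemma \ref{l:QC}: first reduce to $f \in L^\infty$ via a maximal inequality (which does follow from Theorem \ref{t:main}, since the weak-$(1,1)$ jump bound forces $\sup_{N\in\mathbb{D}}|A_N f| < \infty$ a.e., and one then passes to the full supremum by monotonicity and density), and then observe that for bounded $f$ the averages at adjacent scales of a $(1+1/R)$-lacunary sequence $\{\lfloor 2^{k/R}\rfloor\}$ differ by $O(R^{-1})\|f\|_\infty$ pointwise. Convergence along each $\mathbb{D}_R = \{\lfloor 2^{k/R}\rfloor\}$ (from your step 1, applied with $\lambda = 2^{1/R}$) together with this $O(R^{-1})$ control and a countable intersection over $R \in 2^{\mathbb{N}}$ then yields convergence of the full sequence $\{A_N f(x)\}_{N\in\mathbb{N}}$. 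Without this $L^\infty$-closeness-of-adjacent-scales estimate, the passage from lacunary to all $N$ does not go through.
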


Here $7/6 \approx 1.167$ should be compared with Mirek's $30/29 \approx 1.034$. This improved range is due to an additional case analysis in analyzing the statistics of an appropriate counting function, see \eqref{e:countingfunction} below. More precisely, addressing this function reduces to understanding expressions like
\begin{align}
    \sum_{N/2 < x+m, m \leq N} e^{2 \pi i ( h_2(x+m)^{1/c} - h_1m^{1/c})}
\end{align}
where $h_1,h_2$ are restricted to certain subsets of $[1,N]$. Our argument derives from splitting into three cases according to the relative sizes of $h_1,h_2,x$ to more efficiently estimate the above; the main technical tool that allows us to do so is Lemma \ref{l:expsumh1h2u1u2}, which addresses the three relevant cases. In particular, unlike in previous work where understanding the counting function at ``small" values of $x$ produced difficulties, our arguments only degrade for ``large" values of $x \approx N$, see \eqref{e:largexbad}, where one is forced to absorb factors of $|x|^{1/2}$.



\subsection{Proof Strategy}
By Calder\'{o}n's Transference Principle \cite{C1}, see also Lemma \ref{l:trans} below, to prove Theorem \ref{t:maindet} or \ref{t:mainran} it suffices to work in a single measure-preserving system, namely the integers with the shift
\[ (X,\mu,T) = (\mathbb{Z},|\cdot|,x \mapsto x-1).\]
In this context, establishing $L^p$ estimates are fairly straightforward, deriving from Proposition \ref{p:jones} and a Fourier transform argument;\footnote{Since Lemma \ref{l:spdexpestimate} persists for all $1 < c < 2$, \eqref{e:Ehat} holds for pertaining error terms in this larger range, and we can prove $L^p$ convergence results for thicker deterministic sequences, $\lfloor n^c \rfloor, \ 1 < c < 2$, easily replacing lacunary sequences with those of the form $\{ \lfloor 2^{k^\epsilon} \rfloor : k \geq 1\}$. And, by arguing as in \cite[\S 8, Exercise 8.24]{BOOK}, dominating the short $2$-variation inside sub-lacunary blocks $\{ [2^{k^\epsilon},2^{(k+1)^\epsilon}) : k \}$ by the total variation and using the Lipschitz nature of the averaging operators
\[ N \mapsto \frac{1}{N} \sum_{n \leq N} T^{a_n} f, \]
we can remove the restriction to lacunary times -- but only in the case $p > 1$.} the main task is to establish the weak-type $(1,1)$ estimate.

The paradigm for doing so is that of Calder\'{o}n-Zygmund, which leverages four different types of arguments to push exponents down from $p=2$ to the $p=1$ endpoint:  
\begin{itemize}
    \item ``$L^0$" methods, which involve excising exceptional sets;
    \item $L^1$ methods, involving the triangle inequality;
    \item $L^2$ methods, using orthogonality considerations;
    \item $L^{\infty}$ methods, using pointwise control.
\end{itemize}
The role of $L^2$-based orthogonality methods in proving weak-type estimates was not present in the classical context, but was imported to the field by Fefferman \cite{Fef} to address ``singular" averaging operators, and figured prominently in celebrated work of Christ \cite{C00}; see also \cite{STW1,STW2} and \cite{CK} for more modern adaptations.

Our argument, built out of \cite{UZ}, makes use of all four techniques, but especially $\ell^2$-orthogonality methods, which in turn derive from additive combinatorial considerations concerning the statistics of the difference sets
\[ \{ a_n : n \leq N \} - \{ a_n : n \leq N \};\]
this already imposes a natural barrier for sequences with density like the squares, as if $\{ a_n \}$ has density comparable to the set of squares then we might expect the difference set
\[ \{ a_n : n \leq N \} - \{ a_n : n \leq N\} \subset [-N^2,N^2] \]
to have cardinality $\approx N^2$, making it very difficult to derive regularity of the counting function
\begin{align}\label{e:countingfunction} |\{ x : x = a_n - a_m : n, m \leq N \}|;\end{align}
contrast this to the case of thicker, more slowly growing sequences.

Establishing Theorem \ref{t:cor}, our pointwise convergence result, then follows by suitably transferring our main analytic result on sequence-space functions, upon applying van der Corput's method of exponential sums/concentration of measure phenomena, respectively, to show that our deterministic/random classes of examples fall into the desired paradigm.

\subsection{Acknowledgment}
We would like to thank the anonymous referee for a careful review of an earlier draft, which greatly improved the overall strength of the paper.

\section{Preliminaries}\label{ss:not}
As a first-order approximation, we let $\mathbf{N}(a_n)$ denote a homogeneous, quasi-sub-additive function, satisfying the bounds
\begin{align}\label{e:l2} \mathbf{N}(a_k) \lesssim (\sum_k |a_k|^2)^{1/2}. \end{align}
Note that all measurements of oscillation introduced in Definition \ref{d:def} are (essentially) of the form $\mathbf{N}$ whenever $r \geq 2$ in the definition of $\mathcal{V}^r$, see \eqref{e:triangle} and \eqref{e:homo} for the inequalities $\epsilon N_{\epsilon}(a_k)^{1/2}$ satisfies; more precisely, in addition to \eqref{e:l2}, $\mathbf{N}$ will need to satisfy the inequalities
\begin{align}\label{e:jump} \mathbf{N}(\sum_{l=1}^L a^{(l)}_k) \lesssim \min\big\{ L \sum_{l=1}^L \mathbf{N}_L(a_k^{(l)}), \sum_{m =1}^L m^2  \mathbf{N}_m(a_k^{(m)}) \big\} \end{align}
and
\[ \mathbf{N}(\lambda a_k) \leq |\lambda| \cdot \mathbf{N}_{|\lambda|}(a_k) \]
where each $\mathbf{N}, \mathbf{N}_l, \mathbf{N}_{|\lambda|}$ satisfy \eqref{e:l2}, as well as a common upper bound
\[ \| \mathbf{N}_*(f_k(x)) \|_{L^2(X)} \leq A, \; \; \; \mathbf{N}_* \in \{ \mathbf{N}_L, \mathbf{N}_m, \mathbf{N}_{|\lambda|} \} 
\]
whenever 
\[ 0 < \| \mathbf{N}(f_k(x)) \|_{L^2(X)} \leq A.  \]

For the remainder of the paper, we will restrict to the range $r >2$, and will reserve the character $r$ for the variation parameter.

With this formulation in mind, we can neatly express the following result concerning quantitative convergence of the Birkhoff averages \cite{J}
\[ B_N^{\phi} *f(x) := \frac{1}{N} \sum_{n \leq N} f(x-n) \phi(n/N);\]
here and throughout the remainder of the paper, we will let $\phi \in \mathcal{C}^2([-10,10])$ be smooth functions satisfying 
\begin{align}\label{e:phi} \| \phi\|_{L^{\infty}(\mathbb{R})} + \| \phi'\|_{L^{\infty}(\mathbb{R})} + \| \phi''\|_{L^{\infty}(\mathbb{R})} \leq 100.\end{align}
In practice, we will often specialize to
\begin{align}\label{e:phia} \phi_\alpha(t) := \chi(t)/t^\alpha, \; \; \; \mathbf{1}_{[1/2,2]} \leq \chi \leq \mathbf{1}_{[1/4,4]} \end{align}
where $\chi$ is smooth and $0 < \alpha \leq 1$, though we will principally be interested in the case where $0 < \alpha < 1/2$.

\begin{proposition}\label{p:jones}
    Suppose that $\mathbf{N}$ is one of the operators in Definition \ref{d:def} and $\phi$ is as in \eqref{e:phi}. Then for each $p \geq 1$ there exist constants $0 < C_p < \infty$ so that
    \begin{align} 
    &\| \mathbf{N}(B_N^{\phi}*f : N \in \mathbb{N}) \|_{\ell^{p,\infty}(\mathbb{Z})} \\
    & \leq \begin{cases} C_p \| f\|_{\ell^p(\mathbb{Z})} & \text{ if } \mathbf{N} \text{ is a jump-counting/oscillation operator} \\
    C_p \frac{r}{r-2} \| f \|_{\ell^p(\mathbb{Z})} & \text{ if } \mathbf{N} = \mathcal{V}^r, \ r > 2. \end{cases}
    \end{align}
Consequently, for any measure-preserving system $(X,\mu,T)$, the analogous bounds for 
    \[ \| \mathbf{N}(\mathbb{E}_{[N]} T^n f : N \in \mathbb{N}) \|_{L^{p,\infty}(X)} \]
    hold with the same constants.
\end{proposition}

We let $\lambda > 1$ be arbitrary, and let $\mathbb{D} = \mathbb{D}_\lambda \subset \mathbb{N}$ be a $\lambda$-lacunary sequence. For the remainder of the paper, all times will derive from $\mathbb{D}$, and all implicit constants will be allowed to depend on $\lambda$; note that no such restriction is needed for Proposition \ref{p:jones}.
\subsection{Notation}
We use
\[ e(t) := e^{2 \pi i t} \]
throughout to denote the complex exponential, and 
we let
\[ M_{\text{HL}}f(x) := \sup_{r \geq 0} \frac{1}{2r+1} \sum_{|n| \leq r} |f(x-n)|  \]
denote the Hardy-Littlewood Maximal Function. $\{ X_n \}$ will denote independent Bernoulli Random Variables. Throughout, $1 < c < 7/6$ will be a real number bounded above by $7/6$ unless otherwise indicated, and we will mostly be interested in the range $0 < \alpha < 1/2$. We let
\[ \mathbb{E}_{n \in X} a_n := \frac{1}{|X|} \sum_{n \in X} a_n, \]
set $[N] := \{1,\dots,N\}$, and let $\delta_{p}(x) := \mathbf{1}_{x = p}$ denote the point-mass at $p \in \mathbb{Z}$.

\subsection{Asymptotic Notation}\label{sss:O}
We will make use of the modified Vinogradov notation. We use $X \lesssim Y$ or $Y \gtrsim X$ to denote
the estimate $X \leq CY$ for an absolute constant $C$ and $X, Y \geq 0.$  If we need $C$ to depend on a
parameter, we shall indicate this by subscripts, thus for instance $X \lesssim_p Y$ denotes the estimate $X \leq C_p Y$ for some $C_p$ depending on $p$. We use $X \approx Y$ as shorthand for $Y \lesssim X \lesssim Y$. We use the notation $X \ll Y$ or $Y \gg X$ to denote that the implicit constant in the $\lesssim$ notation is $\geq 2^{2^{100}}$, say, and analogously $X \ll_p Y$ and $Y \gg_p X$.

We also make use of big-O and little-o notation: we let $O(Y)$  denote a quantity that is $\lesssim Y$ , and similarly
$O_p(Y )$ will denote a quantity that is $\lesssim_p Y$; we let $o_{t \to a}(Y)$
denote a quantity whose quotient with $Y$ tends to zero as $t \to a$ (possibly $\infty$).

\section{Calder\'{o}n-Zygmund Theory}

We begin by recording the following straightforward lemma, which will be useful for establishing $\ell^2$-estimates, which we use to anchor our endpoint arguments. The $\ell^p$-formulation is no more complicated, and follows from interpolating
\[ \| \mathcal{E}_N*f \|_{\ell^p(\mathbb{Z})} \leq \| \widehat{\mathcal{E}_N} \|^{2/p^*}_{L^{\infty}(\mathbb{T})} \cdot \| f \|_{\ell^p(\mathbb{Z})}, \; \; \; 1 < p < \infty, \ p^* := \max\{p,p'\}.\]

Below, we will slightly abuse notation and conflate an operator with its kernel, thus identifying e.g.\
\[ A_N f = A_N*f.\]
\begin{lemma}\label{l:tri}
    Suppose that $A_N$, $B_N$, $\mathcal{E}_N$, are convex convolution operators, with
    \[ A_N*f = B_N*f + \mathcal{E}_N*f.\]
Further, suppose that for some constant $0 < C_p < \infty$
\[ \| \mathbf{N}(B_N* f : N \in \mathbb{D}) \|_{\ell^p(\mathbb{Z})} \leq C_p \| f \|_{\ell^p(\mathbb{Z})}\]
and 
\[ \| \widehat{\mathcal{E}_N} \|_{L^{\infty}(\mathbb{T})} \lesssim N^{-\epsilon} \]
for some $\epsilon > 0$.
Then
\[ \| \mathbf{N}(A_N *f : N \in \mathbb{D}) \|_{\ell^p(\mathbb{Z})} \lesssim (C_p + \frac{p^*}{\epsilon}) \| f \|_{\ell^p(\mathbb{Z})}.\]    \end{lemma}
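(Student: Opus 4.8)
\textbf{Proof proposal for Lemma \ref{l:tri}.}

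The plan is to exploit the decomposition $A_N = B_N + \mathcal{E}_N$ together with the quasi-sub-additivity of $\mathbf{N}$ recorded in \eqref{e:jump}: applying the $L=2$ case of the first bound in \eqref{e:jump} gives
\[ \mathbf{N}(A_N * f) \lesssim \mathbf{N}_2(B_N * f) + \mathbf{N}_2(\mathcal{E}_N * f), \]
and by the common-upper-bound hypothesis on the family $\{\mathbf{N}, \mathbf{N}_2, \dots\}$ it suffices to control each of the two pieces on the right in $\ell^p(\mathbb{Z})$. The first term is handled directly by hypothesis. So the entire content is to show
\[ \| \mathbf{N}(\mathcal{E}_N * f) \|_{\ell^p(\mathbb{Z})} \lesssim \| f \|_{\ell^p(\mathbb{Z})}, \]
and in fact we will prove the stronger statement that this quantity is $\lesssim \| f\|_{\ell^p(\mathbb{Z})}$ with a gain, which is all that is ever needed downstream.

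For the error term I would first dispatch the case $p = 2$: here one uses \eqref{e:l2}, namely $\mathbf{N}(a_k) \lesssim (\sum_k |a_k|^2)^{1/2}$, to bound
\[ \| \mathbf{N}(\mathcal{E}_N * f) \|_{\ell^2(\mathbb{Z})}^2 \lesssim \sum_N \| \mathcal{E}_N * f \|_{\ell^2(\mathbb{Z})}^2 = \sum_N \| \widehat{\mathcal{E}_N} \widehat{f} \|_{L^2(\mathbb{T})}^2 \le \sum_N \| \widehat{\mathcal{E}_N} \|_{L^\infty(\mathbb{T})}^2 \| f \|_{\ell^2(\mathbb{Z})}^2, \]
and since $\| \widehat{\mathcal{E}_N} \|_{L^\infty(\mathbb{T})} \lesssim N^{-\epsilon}$ and $N$ ranges over the lacunary set $\mathbb{D}$, the sum $\sum_{N \in \mathbb{D}} N^{-2\epsilon}$ converges (this is where lacunarity of $\mathbb{D}$ enters, via the standing convention that all times derive from $\mathbb{D}$). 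This gives the $\ell^2$ bound. For general $p$ one interpolates: the convexity of $\mathcal{E}_N$ as a convolution operator gives the trivial $\ell^1 \to \ell^1$ and $\ell^\infty \to \ell^\infty$ bounds $\| \mathcal{E}_N * f\|_{\ell^p} \le \| f \|_{\ell^p}$ for every $N$, which after passing through $\mathbf{N}$ via \eqref{e:l2}-type reasoning (or rather the full list of structural inequalities on $\mathbf{N}$) combine with the $\ell^2$ bound using the displayed interpolation inequality $\| \mathcal{E}_N * f \|_{\ell^p} \le \| \widehat{\mathcal{E}_N}\|_{L^\infty}^{2/p^*} \| f\|_{\ell^p}$ to yield $\| \widehat{\mathcal{E}_N}\|_{L^\infty}^{2/p^*} \lesssim N^{-2\epsilon/p^*}$ decay, still summable over $\mathbb{D}$; then one sums the $\mathbf{N}_N$-contributions using the second, weighted form of \eqref{e:jump} to absorb the polynomial loss in the number of scales against the geometric decay.

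The main obstacle is bookkeeping rather than conceptual: one must check that the particular oscillation operators of Definition \ref{d:def} (the jump count raised to the power $1/2$, the $r$-variation with $r>2$, and the oscillation operator) genuinely satisfy the abstract inequalities \eqref{e:l2}, \eqref{e:jump} and the homogeneity bound $\mathbf{N}(\lambda a_k) \le |\lambda| \mathbf{N}_{|\lambda|}(a_k)$ with a \emph{uniform} family of comparison operators $\mathbf{N}_*$ obeying a common $\ell^2$ upper bound — precisely the point flagged in the excerpt as "essentially of the form $\mathbf{N}$." Concretely, the only delicate one is $\epsilon N_\epsilon(\cdot)^{1/2}$, for which sub-additivity in the form \eqref{e:jump} is the statement referenced as \eqref{e:triangle}: a jump of size $>\epsilon$ for a sum of $L$ sequences forces a jump of size $>\epsilon/L$ in one of the summands, and splitting the altitude and summing the resulting counts produces exactly the two competing bounds in \eqref{e:jump}. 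Once these structural facts are in hand, the proof is the three-line interpolation-plus-summation argument above, and no further input (no Fourier analysis beyond Plancherel, no additive combinatorics) is required — this is why the lemma is stated as "straightforward" and used only as a black box to graft error terms onto a known main term.
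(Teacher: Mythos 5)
Your proof is correct and follows the same route the paper intends: for $p=2$, Plancherel plus the lacunary summability $\sum_{N \in \mathbb{D}} N^{-2\epsilon} < \infty$; for general $p$, the interpolation inequality $\|\mathcal{E}_N * f\|_{\ell^p} \le \|\widehat{\mathcal{E}_N}\|_{L^\infty}^{2/p^*}\|f\|_{\ell^p}$ displayed just before the lemma, again summed over the lacunary set $\mathbb{D}$. One small overcomplication in your write-up: once you have the pointwise bound $\mathbf{N}(\mathcal{E}_N*f) \lesssim \bigl(\sum_N |\mathcal{E}_N*f|^2\bigr)^{1/2} \le \sum_N |\mathcal{E}_N*f|$ from \eqref{e:l2}, the triangle inequality in $\ell^p$ plus the geometric decay already closes the argument — there is no polynomial loss in the number of scales, so invoking the second, weighted form of \eqref{e:jump} to ``absorb'' such a loss is unnecessary.
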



In what follows, for simplicity we will address the case where
\[ \mathbb{D} = 2^{\mathbb{N}};\]
to pass to the general case, one defines the \emph{dyadic scale} of an element $N \in \mathbb{D}$ to be the unique integer $n = n(N)$, so that
\begin{align}
    2^{n-1} < N \leq 2^n,
\end{align}
and uses this input instead of the identity $N = 2^n$ in the below proof; with this change in mind, the implicit constants in the below adjust by $O(\frac{1}{\lambda - 1})$.

We will deal with convolution operators, $A_N$, satisfying the following properties; below, $0 < \alpha < 1$ is fixed.

\begin{enumerate}\label{e:properties}
    \item \textbf{$\ell^2(\mathbb{Z})$-Boundedness}: $\| \mathbf{N}(A_N*f) \|_{\ell^2(\mathbb{Z})} \lesssim \| f \|_{\ell^2(\mathbb{Z})}$;
    \item \textbf{Sparse Support}: $A_N$ is supported in $[0,N]$ with
\[ |\text{supp}(A_N)|  \approx N^{1-\alpha};\]
\item \textbf{Reflection Symmetry}: With $\tilde{g}(x):=g(-x)$,
\[ A_N * \tilde{A_N} = D_N^{-1} \delta_{\{0\}} + \rho_N + O(N^{-\epsilon - 1})\]
where $D_N \approx N^{1-\alpha}, \ \epsilon > 0$, and $\rho_N(0) = 0$ is an even function which satisfies
\[ |\rho_N(x) - \rho_N(x+h)| \lesssim \frac{|h|}{N^2} \]
whenever $|x|, |x+h| \geq N^{1-\alpha}$, and $|\rho_N| \lesssim 1/N$.
\end{enumerate}

\begin{proposition}\label{p:weak}
Under the above hypotheses
\[ \| \mathbf{N}(A_N f) \|_{\ell^{1,\infty}(\mathbb{Z})} \lesssim \|f \|_{\ell^1(\mathbb{Z})}.\]
\end{proposition}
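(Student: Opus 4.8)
The plan is to run a Calderón–Zygmund decomposition at height $\alpha' > 0$ (to be chosen), writing $f = g + b$ where $g$ is the good part with $\|g\|_{\ell^2}^2 \lesssim \alpha' \|f\|_{\ell^1}$ and $\|g\|_\infty \lesssim \alpha'$, and $b = \sum_Q b_Q$ is the bad part supported on a union of dyadic cubes $Q$ with $\sum_Q |Q| \lesssim \alpha'^{-1}\|f\|_{\ell^1}$, each $b_Q$ mean-zero and supported on $Q$. For the good part, hypothesis (1) ($\ell^2$-boundedness of $\mathbf{N}(A_N \ast \cdot)$) together with the standard $L^2 \to L^{1,\infty}$ Chebyshev bound gives
\[ |\{ \mathbf{N}(A_N g) > \alpha' \}| \lesssim \alpha'^{-2} \|\mathbf{N}(A_N g)\|_{\ell^2}^2 \lesssim \alpha'^{-2} \|g\|_{\ell^2}^2 \lesssim \alpha'^{-1}\|f\|_{\ell^1}, \]
which is the desired bound after rescaling $\alpha'$. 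So the whole game is the bad part.

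For the bad part, the usual Calderón–Zygmund move — dilating each cube $Q$ to $2Q$ or a suitable enlargement and using a Hörmander-type kernel-regularity estimate on the complement — fails here because the kernels $A_N$ are not smooth and have sparse support of size $N^{1-\alpha}$, far smaller than their diameter $N$. This is exactly where hypothesis (3), the reflection symmetry / square function identity, enters, and it is the crux. The strategy is the orthogonality (``$TT^\ast$'' / $\ell^2$-smoothing) method imported by Fefferman and Christ: rather than estimating $\mathbf{N}(A_N b)$ pointwise off an enlarged exceptional set, one estimates $\|\mathbf{N}(A_N b)\|_{\ell^2(E^c)}^2$ where $E$ is the (small-measure) union of enlarged cubes, and exploits the fact that $A_N \ast \tilde A_N = D_N^{-1}\delta_0 + \rho_N + O(N^{-1-\epsilon})$ with $\rho_N$ a smooth, slowly-varying bump of height $\lesssim 1/N$. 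The point is that for $x$ outside $E$ the $\delta_0$ term never sees $b_Q$ (since $Q \subset E$ and the $A_N$ have support in $[0,N]$, the relevant interaction is localized), and the $\rho_N$ term, being Lipschitz with constant $|h|/N^2$ on scales $\geq N^{1-\alpha}$, lets one gain from the mean-zero property of $b_Q$: expanding
\[ \|\mathbf{N}(A_N b)\|_{\ell^2}^2 \lesssim \Big\| \sum_Q \sum_N |A_N b_Q|^2 \Big\|_{\ell^1} \]
(using $\mathbf{N}(a_k) \lesssim (\sum_k |a_k|^2)^{1/2}$ from \eqref{e:l2}) and pairing, the cross terms and diagonal terms are controlled by $\sum_N \langle \tilde A_N \ast A_N b_Q, b_{Q'}\rangle$, into which we substitute the structural identity for $A_N \ast \tilde A_N$.

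Concretely, I would organize it as: (i) fix a cube $Q$ of sidelength $2^s$; the scales $N$ split into $N$ small relative to $2^s$, $N$ comparable, and $N$ large. For $N \lesssim 2^s$ one controls $\sum_N |A_N b_Q|$ crudely in $\ell^1$ by $\|b_Q\|_{\ell^1}$ times the number of relevant scales, which is $O(\log)$ or absorbed — here the sparse support (2) and the diagonal piece $D_N^{-1}\delta_0$ help since $\|A_N\|_{\ell^1 \to \ell^1}$ is bounded. (ii) For $N \gg 2^s$, use the mean-zero cancellation: $A_N b_Q(x) = \sum_{y\in Q} b_Q(y)(A_N(x-y) - A_N(x - c_Q))$, but since $A_N$ itself is not smooth, one instead passes through $\tilde A_N \ast A_N$ and the regularity of $\rho_N$, i.e. works at the level of the square-function / $TT^\ast$ quadratic form rather than pointwise. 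Summing the resulting geometric series in $N/2^s$ and then over cubes $Q$ using $\sum_Q |Q| \lesssim \alpha'^{-1}\|f\|_{\ell^1}$ gives $\|\mathbf{N}(A_N b)\|_{\ell^2(E^c)}^2 \lesssim \alpha' \|f\|_{\ell^1}$, hence $|\{x \notin E : \mathbf{N}(A_N b)(x) > \alpha'\}| \lesssim \alpha'^{-1}\|f\|_{\ell^1}$; and $|E| \lesssim \sum_Q |Q| \lesssim \alpha'^{-1}\|f\|_{\ell^1}$ handles the rest.

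The main obstacle I anticipate is precisely handling the bad part at scales $N$ comparable to the cube size $2^s$, where neither the crude $\ell^1 \to \ell^1$ bound nor the cancellation/regularity of $\rho_N$ is efficient on its own; this transitional regime is where the exponent constraint enters (the sparse-support exponent $1-\alpha$ and, in the deterministic case, the $8/7$ threshold governing the error term $O(N^{-1-\epsilon})$ and the $\rho_N$ regularity must be strong enough for the geometric sums to close). A secondary technical point is propagating the argument through the nonlinear operator $\mathbf{N}$ rather than a linear operator: this is exactly what the axioms \eqref{e:l2}, \eqref{e:jump} and the homogeneity/common-upper-bound hypotheses on $\mathbf{N}, \mathbf{N}_l$ are designed to permit, so I would invoke them to reduce $\mathbf{N}(A_N b) = \mathbf{N}(\sum_Q A_N b_Q)$ to a sum of square-function estimates for the individual $A_N b_Q$, at the cost of the polynomial-in-$L$ losses in \eqref{e:jump}, which are harmless after summing a geometric series.
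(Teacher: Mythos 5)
Your strategic skeleton is right — Calder\'{o}n--Zygmund decomposition, $\ell^2$-boundedness plus Chebyshev for the good part, and the Fefferman--Christ $TT^*$/orthogonality method through hypothesis (3) for the bad part, trading pointwise H\"{o}rmander regularity for the mean-zero of $b_Q$ tested against the Lipschitz continuity of $\rho_N$. But there are two genuine gaps, one of them fatal to the argument as written.

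The fatal one: a single, $N$-independent CZ decomposition $f = g + b$ does not close the diagonal contribution of hypothesis (3). When you expand $\sum_N \|A_N * b\|_{\ell^2}^2$ via $A_N * \tilde A_N = D_N^{-1}\delta_0 + \rho_N + O(N^{-1-\epsilon})$, the $\delta_0$ term produces $D_N^{-1}\|b\|_{\ell^2}^2 \approx N^{\alpha-1}\|b\|_{\ell^2}^2$, and a standard CZ stopping-time gives you $\|b_Q\|_{\ell^1} \lesssim |Q|$ but \emph{no} control on $\|b_Q\|_{\ell^2}$ (an $\ell^1$-normalized point mass makes $\|b_Q\|_{\ell^2}$ arbitrarily large). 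Your parenthetical claim that ``for $x$ outside $E$ the $\delta_0$ term never sees $b_Q$'' is not correct: the identity $\sum_x A_N(x-y)A_N(x-z) = A_N*\tilde A_N(y-z)$ holds only for the unrestricted sum over $x$, and the $y=z$ (diagonal) contribution $\sum_{x} A_N(x-y)^2 = D_N^{-1}$ is not removed by deleting $E$ from the $x$-sum. The paper's fix is exactly what you omit: the decomposition is made \emph{$N$-dependent}, truncating $f$ at height $N^{1-\alpha}$ before forming the bad pieces, so that $D_N^{-1}\|\sum_{s\leq n} B_{n-s}^n\|_{\ell^2}^2 \lesssim N^{\alpha-1}\|f\mathbf{1}_{|f|\leq N^{1-\alpha}}\|_{\ell^2}^2$, which sums over $N\in 2^{\mathbb N}$ to $O(\|f\|_{\ell^1})$. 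The part $f\mathbf{1}_{|f|\geq N^{1-\alpha}}$ that you leave inside $b$ is handled in the paper by a second exceptional set $X := \bigcup_{M\leq N}(\operatorname{supp} A_M + \{|f|\geq N^{1-\alpha}\})$, whose smallness is precisely where the sparse-support hypothesis (2) is used; your proposal mentions (2) only in passing and never builds $X$, so the large values of $f$ have nowhere to go.

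A secondary issue: your remark that ``the $8/7$ threshold governing the error term $O(N^{-1-\epsilon})$ and the $\rho_N$ regularity must be strong enough for the geometric sums to close'' conflates two separate things. Proposition \ref{p:weak} is a self-contained abstract statement under hypotheses (1)--(3), and its proof uses only the qualitative $\epsilon>0$ in the $O(N^{-1-\epsilon})$ error and the stated regularity of $\rho_N$; the $8/7$ constraint enters only later, in the \emph{verification} that $a_n=\lfloor n^c\rfloor$ satisfies hypothesis (3). The rest of your outline — splitting scales $N$ against cube size $2^s$, using that $A_N*b_Q$ sits inside $E$ when $|Q|\geq N$, and exploiting mean-zero against $|\rho_N(x)-\rho_N(x+h)|\lesssim |h|/N^2$ for $N\gg|Q|$, plus invoking \eqref{e:l2} and \eqref{e:jump} to pass through the nonlinear $\mathbf N$ — matches the paper's argument and is sound.
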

\begin{proof}
By homogeneity, possibly after replacing $\mathbf{N} \longrightarrow \mathbf{N}_{|\lambda|}$, it suffices to prove that
\[ |\{ \mathbf{N}(A_Nf) \geq 1\}| \lesssim \|f \|_{\ell^1(\mathbb{Z})}.\]

Let 
\[ X := \bigcup_{M \leq N} \{ \text{supp } A_M\} + \{ x : |f(x)| \geq N^{1-\alpha} \} \]
and
\[ E := \bigcup 100 Q,\]
where here and below, $Q$ are maximal dyadic sub-intervals inside $\{ M_{\text{HL}} f \gtrsim 1\}$, so that in particular
\begin{align}\label{e:stop} \sum_{n \in Q} |f(n)| \approx |Q|,
\end{align}
and
\begin{align}\label{e:1,1}
|E| \lesssim \sum_Q |Q| = |\{ M_{\text{HL}} f \gtrsim 1 \}| \lesssim \| f \|_{\ell^1(\mathbb{Z})}.
\end{align}
Using the trivial estimate
\begin{align}
&| \{ \text{supp } A_M\} + \{ x : |f(x)| \geq N^{1-\alpha} \} | \leq |\{ \text{supp } A_M\}| \cdot |\{ x : |f(x)| \geq N^{1-\alpha} \}| \\
& \qquad \lesssim M^{1-\alpha} |\{ x : |f(x)| \geq N^{1-\alpha}\}| \end{align}
and \eqref{e:1,1}, it suffices to prove that
\[ |\{ (X \cup E)^c : \mathbf{N}(A_N*f) \geq 1\}| \lesssim \|f \|_{\ell^1(\mathbb{Z})}.\]
For each $N = 2^n$, we decompose 
\[ f = f^{\geq n} + \sum_{s \leq n} B_s^n + g\]
where 
\[ f^{\geq n} := f \cdot \mathbf{1}_{|f| \geq 2^{(1-\alpha)n}},\]
where
\[ B_s^n = \sum_{|Q| = 2^s} b_Q^n \]
with
\[ b_Q^n := \big( f \cdot \mathbf{1}_{|f| \leq 2^{(1-\alpha)n}} - \frac{1}{|Q|} \sum_{x \in Q} f(x) \cdot \mathbf{1}_{|f| \leq 2^{(1-\alpha)n}} \big) \cdot \mathbf{1}_Q, \]
so that $\| b_Q^n \|_{\ell^1(\mathbb{Z})} \lesssim |Q|$ by \eqref{e:stop}, and $|g| \lesssim 1$ is defined by subtraction. More generally, note that
\begin{align}\label{e:set} \sup_k \| \sum_{s \leq m} B_s^k \|_{\ell^1(J)} \lesssim |E \cap J| 
\end{align}
for any $|J| \geq 2^m$.

With $\mathbf{N}_1,\mathbf{N}_2$ as in \eqref{e:jump}, we use the $\ell^2(\mathbb{Z})$ boundedness of $\mathbf{N}(A_N*g)$ to estimate
\begin{align}
    &|\{ (X \cup E)^c : \mathbf{N}(A_N*f) \geq 1\}| \lesssim |\{ \mathbf{N}_1(A_N*g) \gtrsim 1\}| + |\{ \mathbf{N}_2( A_N * \sum_{s \leq n} B_{n-s}^n : N) \gtrsim 1 \}| \\
    & \qquad \lesssim \| \mathbf{N}_1(A_N*g) \|_{\ell^2(\mathbb{Z})}^2 + |\{ \sum_N |A_N * \sum_{s \leq n} B_{n-s}^n|^2 \gtrsim 1\}| \\
    & \qquad 
    \lesssim \| g \|_{\ell^2(\mathbb{Z})}^2 + \sum_N \| A_N * \sum_{s \leq n} B_{n-s}^n\|_{\ell^2(\mathbb{Z})}^2 \\
    &  \qquad \lesssim  
    \|f \|_{\ell^1(\mathbb{Z})} + \sum_N \| A_N * \sum_{s \leq n} B_{n-s}^n\|_{\ell^2(\mathbb{Z})}^2;
\end{align}
the key steps in this reduction are that $\{ A_N*f^{\geq n} : N \}$ are all supported in $X$, and $\{ A_N*B_{m}^n, m \geq n, N \}$ are all supported in $E$. So, it suffices to prove that
\begin{align}
\sum_N \| A_N*\sum_{s \leq n} B_{n-s}^n \|_{\ell^2(\mathbb{Z})}^2 \lesssim \| f\|_{\ell^1(\mathbb{Z})}.
\end{align}
Expanding out the square, we compute
\begin{align}
    &\| A_N*\sum_{s \leq n} B_{n-s}^n \|_{\ell^2(\mathbb{Z})}^2 = \langle A_N*\tilde{A_N} * \sum_{s \leq n} B_{n-s}^n,\sum_{t \leq n} B_{n-t}^n  \rangle \\
    & \qquad = D_N^{-1} \| \sum_{s \leq n} B_{n-s}^n \|_{\ell^2(\mathbb{Z})}^2 + \langle \rho_N* \sum_{s \leq n} B_{n-s}^n, \sum_{t \leq n} B_{n-t}^n \rangle \\
    & \qquad \qquad + \sum_{|P| = |Q| = N, \ \text{dist}(P,Q) \leq N} O(N^{-\epsilon - 1} \cdot |P \cap E| \cdot |Q \cap E|) \\
    & \qquad \lesssim N^{\alpha -1 } \| f \cdot \mathbf{1}_{|f| \leq N^{1-\alpha}}\|_{\ell^2(\mathbb{Z})}^2 + |\langle \rho_N * \sum_{s \leq n} B_{n-s}^n, \sum_{t \leq s} B_{n-t}^n \rangle| + N^{-\epsilon} |E|,
\end{align}
see \eqref{e:set}.
Since the first and third term sum over $N \in 2^{\mathbb{N}}$ to $O(\| f \|_{\ell^1(\mathbb{Z})})$, see \eqref{e:1,1}, we only focus on the contribution of the second term. To this end, for each $t \leq s$, we will bound, for some $\kappa > 0$
\begin{align}\label{e:est}
    |\langle \rho_N *B_{n-s}^n, B_{n-t}^n \rangle| \lesssim 2^{-\kappa s} \| B_{n-t}^n \|_{\ell^1(\mathbb{Z})},
\end{align}
at which point we may sum
\begin{align}
    \sum_{N} |\langle \rho_N *\sum_{s \leq n} B_{n-s}^n, \sum_{t \leq s} B_{n-t}^n \rangle| &\lesssim
        \sum_{0 \leq t \leq s \leq n} |\langle \rho_N *B_{n-s}^n,  B_{n-t}^n \rangle| \\
    & \qquad \lesssim \sum_{t \leq n} 2^{- \kappa t} \| B_{n-t}^n \|_{\ell^1(\mathbb{Z})}
\end{align}
and so
\begin{align}
\sum_N |\langle \rho_N*\sum_{s \leq n} B_{n-s}^n,\sum_{t \leq n} B_{n-t}^n \rangle| &\lesssim \sum_{t} 2^{-\kappa t} \sum_{t \leq n} \| B_{n-t}^n \|_{\ell^1(\mathbb{Z})}  \\
& \qquad \lesssim \sum_t 2^{-\kappa t} \| f \|_{\ell^1(\mathbb{Z})} \lesssim \| f\|_{\ell^1(\mathbb{Z})}.
\end{align}
We turn to \eqref{e:est}, where it suffices to establish the pointwise bound
\begin{align}
    \| \rho_N * B_{n-s}^n \|_{\ell^{\infty}(\mathbb{Z})} \lesssim 2^{-\kappa s}.
\end{align}
Since $\text{supp } \rho_N \subset [-N,N]$, by translation invariance we may assume that $B_{n-s}^n$ is supported in $[0,N]$. 

Since $B_{n-s}^n$ has mean zero over dyadic intervals of length $2^{n-s}$, we can express
\begin{align}
    \sum_k \rho_N(x-k) B_{n-s}^n(k) &= \sum_{|Q| = 2^{n-s}} \big( \sum_{k \in Q} \rho_N(x-k) B_{n-s}^n(k) \big) \\
    &  \qquad = \sum_{|Q| = 2^{n-s}} \sum_{k \in Q} \big( \rho_N(x-k) - \rho_N(x-c_Q) \big) B_{n-s}^n(k),
\end{align}
where $x \in [-N,N]$, and $c_Q$ is (say) the left endpoint of each $Q$. So, regarding $x$ as arbitrary but fixed, we estimate the local contribution without exploiting any moment condition on the $\{ B_{n-s}^n\}$, simply using \eqref{e:set} to bound
\begin{align}
\sum_{|Q| = 2^{n-s}, \ \text{dist}(Q,x) \leq 2^{n-\kappa s}} \big( \sum_{k \in Q} \rho_N(x-k) B_{n-s}^n(k) \big) \lesssim N^{-1} \| B_{n-s}^n \mathbf{1}_{x + O(2^{n-\kappa s})} \|_{\ell^1(\mathbb{Z})} \lesssim 2^{-\kappa s},
\end{align}
where (possibly after decreasing $\kappa$), we may ensure that 
\[ N^{1-\alpha} = 2^{n(1-\alpha)} \ll 2^{n- s \kappa},\]
as $s \leq n$. And, in the complementary regime, whenever $|Q| = 2^{n-s}$ is such that 
\[ \text{dist}(Q,x) \geq 2^{n -\kappa s} \gg N^{1-\alpha},\]
we may bound
\[ |\sum_{k \in Q} \big( \rho_N(x-k) - \rho_N(x-c_Q) \big) B_{n-s}^n(k)| \lesssim \frac{|Q|}{N^2} \| B_{n-s}^n \|_{\ell^1(Q)},\]
so that
\begin{align}
    \sum_{|Q| = 2^{n-s}, \  \text{dist}(Q,x) \geq 2^{n-\kappa s}} \big( \sum_{k \in Q} \rho_N(x-k) B_{n-s}^n(k) \big) = O(2^{-s-n} \| B_{n-s}^n \|_{\ell^1([0,N])}) = O(2^{-s}).
\end{align}

\end{proof}

With this proof in hand, in the following section, we will show that the operators
\[  \frac{1}{N} \sum_{n \leq N} \delta_{ a_n } \]
(almost surely) satisfy the conditions of Proposition \ref{p:weak} whenever $\{ a_n \}$ are as in the statement of Theorem \ref{t:maindet} or \ref{t:mainran}.

\section{Examples}
We now show that appropriate averaging operators deriving from our sequences $\{ a_n \}$ satisfy the three conditions, \textbf{$\ell^2$-Boundedness; Sparse Support; and Reflection Symmetry}, with the third being the significant point. By convexity, see \cite[Page 23]{KMT}, Theorem \ref{t:maindet} or \ref{t:mainran} will follow from an analogous formulation involving the upper-half averages,
\[ \frac{1}{N} \sum_{N/2 < n \leq N} T^{a_n} f:\]
if we set
\[ \mathbb{D}_L := \mathbb{D}\cap[1,L], \]
and
\begin{align}
    a_N := \frac{1}{N} \sum_{n \leq N} T^{a_n} f, \; \; \; \tilde{a}_N := \frac{1}{N} \sum_{N/2 < n \leq N} T^{a_n} f
\end{align}
then by \cite[Page 23]{KMT}, and the second minimization in \eqref{e:jump}, for any $L$ we may dominate
\begin{align}
   \mathbf{N}(a_N : N \in \mathbb{D}_L ) &\leq \sum_{k =0}^L k^2 2^{-k} \cdot \mathbf{N}_k(\tilde{a}_{N/2^k} \mathbf{1}_{2^k \leq N } : N \in \mathbb{D}_L)  \\
   & \qquad + O(\sum_{k \leq L} \sum_{N \in \mathbb{D}_L} \frac{1}{N} \mathbf{1}_{2^k \leq N} |\tilde{a}_{N/2^k}|),
\end{align}
from which the result follows from a monotone convergence argument. In what follows, we will strict our averaging operators accordingly. 

\subsection{Deterministic Examples}
In this section, we define
\begin{align}\label{e:Aphi} A_N^{\phi} := \frac{1}{\varphi(N)} \sum_{(N/2)^{1/c} \leq n \leq N^{1/c}} \phi(n) \delta_{\lfloor n^c \rfloor},\end{align}
where $\phi$ is as in \eqref{e:phi}; we suppress the super-script when $\phi$ is constant, and set
\[ \varphi(N) := N^{1/c},\]
where we relate
\[ 1 < c := \frac{1}{1-\alpha} < 2,\]
so that we have $a_n = \lfloor n^c \rfloor$. Let 
\[
\mathbb{N}_c= \{\lfloor n^c \rfloor: n \in \mathbb{N}\},
\]
so that 
\[
|\mathbb{N}_c \cap[N]|=|\{\lfloor n^c \rfloor \leq N: n \in \mathbb{N} \}|= \lfloor N^{\frac{1}{c}} \rfloor = N^{\frac{1}{c}} + O(1).
\]
With $\phi_\alpha$ as in \eqref{e:phia}, set
\begin{equation}\label{e:detANBN}
B_N :=\frac{1}{c \varphi(N)}\sum_{N/2 < n \leq N} n^{-\alpha} \delta_{n} = \frac{1}{c N} \sum_{N/2 < n \leq N} \phi_\alpha(n/N) \delta_n;
\end{equation}
consolidate
\[
\mathcal{E}_N = A_N-B_N.
\]
The first elementary lemma concerns regularity properties of
\[ B_N^{\phi}:= \frac{1}{N} \sum_{N/2 < n \leq N} \phi(n/N) \delta_n.\]
\begin{lemma}\label{e:Bsmooth}
    For any $\phi$ as in \eqref{e:phi}
    \[ B_N^{\phi}*\tilde{B_N^\phi} = O(1/N),\]
and
\[ |B_N^{\phi}*\tilde{B_N^\phi}(x) - B_N^{\phi}*\tilde{B_N^\phi}(x+h)| \lesssim \frac{|h|}{N^2}.\]
\end{lemma}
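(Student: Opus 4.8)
The plan is to compute $B_N^\phi * \widetilde{B_N^\phi}$ directly and compare it to a Riemann sum for a smooth integral. Writing out the convolution,
\[
B_N^\phi * \widetilde{B_N^\phi}(x) = \frac{1}{N^2} \sum_{\substack{N/2 < n \le N \\ N/2 < m \le N}} \phi(n/N)\,\phi(m/N)\,\mathbf{1}_{n - m = x},
\]
so the sum is supported in $|x| < N/2$ and for each such $x$ it runs over $m$ in an interval of length $O(N)$ with $n = m + x$. Thus
\[
B_N^\phi * \widetilde{B_N^\phi}(x) = \frac{1}{N^2} \sum_{m} \phi\bigl((m+x)/N\bigr)\,\phi(m/N),
\]
where $m$ ranges over the integers with both $N/2 < m \le N$ and $N/2 < m + x \le N$. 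Since $\phi$ is bounded by $100$ and the number of terms is $O(N)$, the first bound $B_N^\phi * \widetilde{B_N^\phi} = O(1/N)$ is immediate.

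For the regularity estimate, I would compare the $x$-sum and the $(x+h)$-sum term by term. Fix $h$; without loss of generality $|h| \le N/2$ (otherwise both sides are $O(1/N^2) \cdot |h|$ trivially after noting the supports, or one simply checks $|h|/N^2 \gtrsim 1/N$). Pair up the index $m$ in the first sum with $m$ in the second; the summand difference is
\[
\phi(m/N)\bigl[\phi\bigl((m+x)/N\bigr) - \phi\bigl((m+x+h)/N\bigr)\bigr],
\]
which by the mean value theorem and \eqref{e:phi} is $O(\|\phi'\|_\infty \cdot |h|/N) = O(|h|/N)$ in absolute value, uniformly in $m$; summing over the $O(N)$ relevant values of $m$ gives $O(|h|/N)$ — but this only yields $|h|/N$, not $|h|/N^2$, so a crude triangle inequality is not enough. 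The gain of one more power of $N$ must come from a second-order cancellation: one should instead write the difference of the two sums as a discrete second difference. Precisely, set $F(x) := \sum_m \phi((m+x)/N)\phi(m/N)$; then $B_N^\phi * \widetilde{B_N^\phi}(x) = F(x)/N^2$ and I claim $F(x) - F(x+h) = O(|h| \cdot 1)$, i.e. $F$ is Lipschitz with an $N$-independent constant. This is where the boundary terms matter: differentiating the continuous model $\int \phi((t+x)/N)\phi(t/N)\,dt$ in $x$ gives $\frac{1}{N}\int \phi'((t+x)/N)\phi(t/N)\,dt$, which is $O(1)$ since the integrand is $O(1/N)$ over a range of length $O(N)$; the discrepancy between $F$ and this integral, controlled by $\|\phi'\|_\infty, \|\phi''\|_\infty$ and the (smooth, compactly inside $(0,\infty)$) cutoffs, is likewise $O(1)$ with $N$-independent constant. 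Either Euler–Maclaurin applied to the $m$-sum, or directly estimating $F(x+h) - F(x) = \sum_m \phi(m/N)[\phi((m+x+h)/N) - \phi((m+x)/N)]$ and then using $\sum_m \phi(m/N)\psi(m/N) = O(N)$ combined with summation by parts to extract the extra $|h|/N$ — since the bracket, as a function of $m$, has total variation $O(|h|/N)$ and $\phi(\cdot/N)$ has total variation $O(1)$ — delivers $F(x+h) - F(x) = O(|h|)$, and dividing by $N^2$ finishes the lemma.

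The main obstacle is precisely this point: the naive term-by-term bound loses a factor of $N$, so one genuinely needs to exploit that $B_N^\phi$ is a \emph{smooth} (rather than arbitrary) average, extracting the cancellation either via summation by parts against the total-variation bound for the shifted smooth profile or via an Euler–Maclaurin/Poisson-summation comparison with the continuous convolution $\phi_N * \widetilde{\phi_N}$ where $\phi_N(t) = N^{-1}\phi(t/N)\mathbf{1}_{(N/2,N]}$. I expect the summation-by-parts route to be cleanest: it isolates the role of $\|\phi''\|_\infty$ in \eqref{e:phi} and makes transparent that the estimate is uniform in $x$, $h$, and in the particular admissible $\phi$.
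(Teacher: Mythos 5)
Your conclusion is correct, and the intermediate machinery you invoke would work, but you have talked yourself out of the simple argument because of an arithmetic slip. You write that each summand difference is $O(|h|/N)$ and that ``summing over the $O(N)$ relevant values of $m$ gives $O(|h|/N)$'' --- but summing $O(N)$ terms each of size $O(|h|/N)$ gives $O(|h|)$, not $O(|h|/N)$. Once this is corrected, the crude triangle inequality closes the estimate with no need for Euler--Maclaurin or Abel summation: with $F(x) := \sum_m \phi(m/N)\,\phi\big((m+x)/N\big)$ over the relevant range of $m$, the mean value theorem and \eqref{e:phi} give $|\phi((m+x)/N) - \phi((m+x+h)/N)| \lesssim |h|/N$ uniformly in $m$, so the bulk contribution to $|F(x) - F(x+h)|$ is $\lesssim N \cdot |h|/N = |h|$; the boundary terms (indices $m$ that enter or leave the admissible range when $x \to x+h$) number $O(|h|)$ and each is $O(1)$, giving a further $O(|h|)$. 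Hence $|F(x) - F(x+h)| \lesssim |h|$, and dividing by the $N^2$ normalization yields $|B_N^\phi * \tilde{B_N^\phi}(x) - B_N^\phi * \tilde{B_N^\phi}(x+h)| \lesssim |h|/N^2$.

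This is exactly the paper's proof: it computes the discrete first difference $B_N^\phi * \tilde{B_N^\phi}(x) - B_N^\phi * \tilde{B_N^\phi}(x+1) = O(1/N^2)$ by the regularity of $\phi$ (a one-line version of the bulk-plus-boundary count above, with $h=1$), and then one telescopes to general $h$. Your first bound ($O(1/N)$) is the same as the paper's ``just convexity,'' and is fine. The summation-by-parts route you sketch does also give $F(x) - F(x+h) = O(|h|)$, so it is not wrong --- but it buys nothing over the triangle inequality here, and by routing the argument through total variation of the shifted profile (which requires $\|\phi''\|_\infty$) it obscures the fact that only $\|\phi'\|_\infty$ is actually needed for this lemma.
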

\begin{proof}
    The first point is just convexity; for the second, we compute the discrete derivative
\begin{align}
    B_N^{\phi}*\tilde{B_N^\phi}(x) &= \frac{1}{N^2} \sum_{N/2< n,x+n \leq N} \phi((x+n)/N) \phi(n/N) \\
    & \qquad = B_N^{\phi}*\tilde{B_N^\phi}(x+1) + O(1/N^2)
\end{align}
by the regularity of $\phi$.   
\end{proof}

We first claim that, for any $1<c<2$, there exists $\epsilon = \epsilon(c) > 0$ so that
\begin{equation}\label{e:Ehat}
\|\widehat{\mathcal{E}_N}\|_{L^{\infty}(\mathbb{T})} \lesssim N^{- \epsilon};   
\end{equation}
note that this immediately implies quantitative convergence of the pertaining ergodic averages on $L^p(X)$, for any measure-preserving system, by Lemmas \ref{l:tri} and \ref{l:trans}.

The following Lemma is essentially given in \cite[{Proposition 2.1}]{CCH} as the regularity of our amplitudes can be used to reduce to the case of constant weights, and will be used to prove \eqref{e:Ehat}.

\begin{lemma}\label{l:spdexpestimate}
Suppose that $\alpha$ as stated above, $N$ is a sufficiently large integer, and $\phi$ is as in \eqref{e:phi}. Then for any $\theta \in \mathbb{T}$, and any $N/2 < t \leq N$, we have, for any $\epsilon>0$,
\[
\sum_{n \in \mathbb{N}_c\cap(N/2,t]}c \phi(n/N) n^{\alpha} e(n \theta) = \sum _{n \in (N/2,t]} \phi(n/N) e(n \theta) + \mathcal{E}^{\phi}_N(\theta;t) 
\]
where $\mathcal{E}^{\phi}_N(\theta;t) = O(N^{\frac{6}{5} - \frac{2}{5c} + \epsilon})$ uniformly in $\theta,t, \phi$.
\end{lemma}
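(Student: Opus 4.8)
## Proof Proposal

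The plan is to reduce the weighted exponential sum over the range of $\lfloor n^c \rfloor$ to an unweighted sum over the same range, and then to convert that into a sum over consecutive integers using the standard device of expressing membership in $\mathbb{N}_c$ via a fractional-part indicator. First I would handle the amplitude: writing $m = \lfloor n^c \rfloor$ as the summation variable, so that $n = m^{1/c} + O(m^{1/c - 1})$ on the relevant range, and using that $\phi$ has bounded derivatives (see \eqref{e:phi}), I would expand $c\,\phi(n/N)\, n^\alpha$ as a smooth function of $m$ plus an error controlled by $\phi'$; summation by parts (Abel summation) then transfers the weight, the total error being comfortably absorbed into $O(N^{1/2+\alpha})$ since the number of terms is $O(N^{1/c}) = O(N^{1-\alpha})$ and each smoothing step loses only an $O(N^{-1})$-type factor. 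This is exactly the reduction invoked when the excerpt says ``the regularity of our amplitudes can be used to reduce to the case of constant weights,'' so following \cite[Lemma 4.3]{SDP} I would carry it out for $\phi \equiv 1$ and restore $\phi$ at the end by another Abel summation.

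For the core unweighted estimate, the key identity is that for $m$ in the dyadic block $(N/2, N]$, the condition $m \in \mathbb{N}_c$, i.e. $m = \lfloor n^c \rfloor$ for some integer $n$, is equivalent to $\lfloor (m+1)^{1/c} \rfloor - \lfloor m^{1/c} \rfloor = 1$ — equivalently, to the statement that the interval $(m^{1/c}, (m+1)^{1/c}]$ contains an integer. Thus $\mathbf{1}_{m \in \mathbb{N}_c} = \lfloor (m+1)^{1/c}\rfloor - \lfloor m^{1/c}\rfloor = \tfrac{1}{c} m^{1/c - 1} + \psi(m^{1/c}) - \psi((m+1)^{1/c})$ where $\psi(x) = \{x\} - 1/2$ is the sawtooth function and I have used a Taylor expansion for the main term. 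The main term $\tfrac{1}{c} m^{1/c-1} = m^{\alpha - 1}/c$ is precisely the density factor, so after inserting the weight $c\, m^{\alpha}$ one gets $\sum_{m \in (N/2,t]} e(m\theta)$ plus a contribution $c\sum_m m^\alpha\big(\psi(m^{1/c}) - \psi((m+1)^{1/c})\big) e(m\theta)$. Summing this last piece by parts against $e(m\theta)$ — the partial sums of $e(m\theta)$ being $O(\min(N, \|\theta\|^{-1}))$, and the differences of $c m^\alpha$ being $O(N^{\alpha - 1})$ — reduces everything to bounding $\sum_{m} m^\alpha \psi(m^{1/c}) e(m\theta)$, or after a further summation by parts, exponential sums of the shape $\sum_{n} e(\beta n^c)$ twisted by $e$-factors from the Fourier expansion of $\psi$.

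The main obstacle — and the only place real analytic number theory enters — is bounding $\sum_{m \sim N} \psi(m^{1/c}) e(m\theta)$, or after dualizing via the truncated Fourier series $\psi(x) = \sum_{1 \le |h| \le H} \tfrac{e(hx)}{2\pi i h} + O\!\big(\min(1, \tfrac{1}{H\|x\|})\big)$, the van der Corput / Weyl estimates for $\sum_{n \sim N^{1/c}} e(h n^c + \theta n^c)$ — actually after the change of variables $m \leftrightarrow n$ these become genuine one-dimensional exponential sums $\sum_{n \sim N^{1/c}} e(f(n))$ with $f(n) \asymp (h+\theta \cdot \text{const}) n^c$, $f''(n) \asymp h N^{1/c - 2}$. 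Here the second-derivative (van der Corput A-process) estimate gives $\ll |h|^{1/2} N^{1/(2c)} + |h|^{-1/2} N^{1/c - 1/(2c)+1/2}\cdot(\dots)$, which after summing over $h \le H$ and optimizing $H$ yields a bound of size $O(N^{1/2+\alpha})$ uniformly in $\theta$ and $t$ for the full range $1 < c < 2$, i.e. $0 < \alpha < 1/2$. The uniformity in $t$ is free because completing the sum over the dyadic block to an arbitrary sub-interval $(N/2, t]$ costs at most a factor $\log N$ by the standard completion lemma (or can be folded into the $\psi$-expansion directly), and uniformity in $\theta$ is automatic since the van der Corput bounds above are $\theta$-independent once $h \ge 1$, while the $h = 0$ term was already extracted as the main term. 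I would present the van der Corput input as a black-box lemma (e.g. from \cite[\S 5]{BOOK} or a standard reference) and spend the bulk of the write-up on the two Abel-summation reductions, since those are where the specific weight $c\,\phi(n/N)\,n^\alpha$ and the specific exponents $1/c = 1-\alpha$ are used.
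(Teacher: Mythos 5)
Your approach matches what the paper does: the paper itself cites \cite[Lemma 4.3]{SDP} and remarks only that the regularity of $\phi$ reduces to the constant-weight case, and the machinery your proposal invokes — the sawtooth expansion $\mathbf{1}_{n\in\mathbb{N}_c}=\lfloor -n^{1/c}\rfloor-\lfloor -(n+1)^{1/c}\rfloor$ of \eqref{e:sawtooth}, the truncated Fourier series for $\psi$ from Lemma \ref{fracPrt}, and the second-derivative van der Corput bound of Lemma \ref{Vdcptbound} with phase $m\theta \pm hm^{1/c}$ so that $f''\asymp hN^{1/c-2}$ — is exactly the toolkit the paper deploys for the parallel computation of $\widehat{A_N}$ in \eqref{e:FourierDecompose4cases}. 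Two small slips worth fixing but not structural: $\tfrac1c m^{1/c-1}=\tfrac1c m^{-\alpha}$, not $m^{\alpha-1}/c$; and the second term in the van der Corput bound should carry exponent $N^{1-1/(2c)}$ (as in Lemma \ref{l:expsumh1h2u1u2}(1)), not $N^{1/(2c)+1/2}$.
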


Let us prove how to apply the above lemma to justify the stated upper bound for $\widehat{
\mathcal{E}}_N(\beta)$.

\begin{proof}[Verification of \eqref{e:Ehat}]
By partial summation 
\begin{align*}
\sum_{(N/2)^{1/c} < n \leq N^{\frac{1}{c}}} e(\lfloor n^c \rfloor \beta) & = \sum_{n \in \mathbb{N}_c \cap (N/2,N]}c n^\alpha e(n \beta) \frac{1}{c n^\alpha} \\ 
& = \int_{N/2}^{N} \frac{1}{ct^{\alpha}} \ d (\sum_{n \in \mathbb{N}_c \cap (N/2,t]}c n^\alpha e(n \beta))\\
& = \int_{N/2}^{N} \frac{1}{ct^{\alpha}} \ d (\sum_{n \in (N/2,t]} e(n \beta)) + \int_{N/2}^N \frac{1}{ct^{\alpha}} \ d \mathcal{E}_N(\theta;t) \\
& = \frac{1}{N^\alpha} \sum_{N/2 < n \leq N} \phi_\alpha(n/N) e(n \beta) +O(N^{1/2})
\end{align*}
by integration by parts.
\end{proof}

With the above in mind, by Lemma \ref{l:tri} we need only establish \textbf{Reflection Symmetry}, namely Property $(3)$ from our list of properties, as the second property is trivial; for the remainder of this section we only need to focus on decomposing 
\[
A_N*\tilde{A}_{N}(x).
\]

We begin by recalling the crucial van der Corput Lemma on exponential sums \cite[Satz 4]{VdC}, which will be used repeatedly to bound error terms which appear in our decomposition of $A_N*\tilde{A}_{N}(x)$.

\begin{lemma}[Van der Corput's Lemma]\label{Vdcptbound}
Assume that $a,b \in \mathbb{R}$ and $a<b$. Let $F \in C^2([a,b])$ be a real-valued function and $I$ be a subinterval of $[a,b]$. If there exists $\lambda >0$ and $v \geq 1$ such that 
\[
\lambda \lesssim |F''(x)| \lesssim v \lambda
\]
for every $x \in I \subset [a,b]$, where $I$ is a sub-interval, then we have 
$$
|\sum_{k \in I}e(F(k))| \lesssim v|I| \lambda^{1/2} + \lambda^{-1/2}.
$$
\end{lemma}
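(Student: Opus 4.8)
The plan is to run van der Corput's classical second-derivative argument. First I would note that since $F \in C^2([a,b])$ and $|F''|$ is bounded below on $I$ by a positive constant multiple of $\lambda$, the continuous function $F''$ cannot change sign on $I$; hence $F'$ is strictly monotone there, and replacing $F$ by $-F$ if necessary we may assume $F'$ is increasing. We may also assume $\lambda < 1/4$, since when $\lambda \gtrsim 1$ we have $v|I|\lambda^{1/2} \gtrsim |I|$ (as $v \ge 1$) and the claim follows from the trivial bound $|\sum_{k \in I} e(F(k))| \le |I| + 1$. Set the threshold $\delta := \lambda^{1/2} \in (0,1/2)$.

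Next I would partition $I$ into consecutive subintervals according to the size of $\|F'(x)\|$, the distance from $F'(x)$ to the nearest integer. Since $F'$ is increasing, its image on $I$ is an interval of length $\int_I F''$, which lies between constant multiples of $\lambda|I|$ and of $v\lambda|I|$; therefore $\{x \in I : \|F'(x)\| < \delta\}$ is a union of $O(v\lambda|I| + 1)$ intervals, and so is its complement in $I$. On each \emph{good} subinterval $J$ (where $\|F'\| \ge \delta$ throughout) the derivative $F'$ is monotone and trapped in some $[m+\delta, m+1-\delta]$, so the Kusmin--Landau first-derivative estimate gives $|\sum_{k \in J} e(F(k))| \lesssim \delta^{-1} = \lambda^{-1/2}$. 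On each \emph{bad} subinterval $J$ (where $\|F'\| < \delta$), the lower bound $|F''| \gtrsim \lambda$ forces $F'$ to traverse a range of length $< 2\delta$ over a domain of length $\lesssim \delta/\lambda = \lambda^{-1/2}$, so $J$ contains $\lesssim \lambda^{-1/2}$ integers (using $\lambda < 1/4$, so $\lambda^{-1/2} \ge 2$) and the trivial bound again gives $|\sum_{k \in J} e(F(k))| \lesssim \lambda^{-1/2}$. Summing over the $O(v\lambda|I| + 1)$ subintervals yields $|\sum_{k \in I} e(F(k))| \lesssim (v\lambda|I| + 1)\lambda^{-1/2} = v|I|\lambda^{1/2} + \lambda^{-1/2}$, as asserted.

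The main obstacle is the Kusmin--Landau step: showing that if $F'$ is monotone and $\|F'\| \ge \delta$ on an interval then $|\sum_k e(F(k))| \lesssim \delta^{-1}$. I would handle this by summation by parts: writing $e(F(k)) = (e(F(k+1)) - e(F(k)))/(e(g(k)) - 1)$ with $g(k) := F(k+1)-F(k)$, and using the identity $1/(e(g)-1) = -\tfrac12 - \tfrac{i}{2}\cot(\pi g)$ together with the fact that $g(k)$ is monotone in $k$ and confined to a fixed unit interval bounded away from its endpoints (so $\cot(\pi g(k))$ is monotone in $k$ and of size $\lesssim \delta^{-1}$); Abel summation against the telescoping partial sums of $e(F(k+1)) - e(F(k))$, which are $O(1)$, then gives the bound. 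Everything else — the sign/monotonicity reduction, the two elementary exponential-sum estimates, and especially the counting showing the number of subintervals of each type is genuinely $O(v\lambda|I|+1)$, which is precisely where the hypothesis $|F''| \lesssim v\lambda$ is used — is routine.
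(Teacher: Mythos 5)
The paper does not prove this lemma; it is quoted as a classical result and cited directly to van der Corput's 1929 paper (\cite[Satz~4]{VdC}), so there is no in-paper argument to compare yours against. Your proof is the standard textbook derivation of the second-derivative test: reduce to $F'$ monotone, split $I$ into $O(v\lambda|I|+1)$ subintervals according to whether $\|F'\|$ is above or below $\lambda^{1/2}$ (using the upper bound $|F''|\lesssim v\lambda$ to count the pieces and the lower bound $|F''|\gtrsim\lambda$ to shorten the bad ones), apply Kusmin--Landau on the good pieces and the trivial bound on the bad ones, and obtain $v|I|\lambda^{1/2}+\lambda^{-1/2}$. The Kusmin--Landau ingredient, via Abel summation and the identity $1/(e(g)-1)=-\tfrac12-\tfrac{i}{2}\cot(\pi g)$ with $g(k)=F(k+1)-F(k)$ monotone, is also the standard route. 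This is correct and is exactly the argument van der Corput himself used, so in substance you are reproducing the cited source.

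One small point worth flagging, mostly for hygiene: your reduction to $\lambda<1/4$ invokes the trivial bound $|I|+1$, but when $|I|<1$ the additive $+1$ is not dominated by $v|I|\lambda^{1/2}+\lambda^{-1/2}$ for all $\lambda\gtrsim1$. This is really a defect of the lemma as stated (with $|I|$ meaning length rather than the number of lattice points), and it never arises in the paper's applications, where $|I|\asymp N$ is large; but if you want the reduction to be airtight you should either assume $|I|\geq1$ or note that the sum is vacuous or a single term in the degenerate case. Likewise the Kusmin--Landau step technically requires a remark that $g(k)=F(k+1)-F(k)$ equals $F'(\xi_k)$ for some $\xi_k\in(k,k+1)$, which may fall just outside the good subinterval $J$ at its right edge; the usual fix is to peel off the last term, contributing $O(1)\lesssim\lambda^{-1/2}$. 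Neither issue affects the substance of your argument.
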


The following consequence is the key analytic input needed to establish our desired decomposition.
\begin{lemma}\label{l:expsumh1h2u1u2}
Let $N$ be a sufficiently large integer, $\theta \in [0,1)$, $1 \leq x \lesssim N$, $N/2 < t \leq N$, and $\phi$ be as in \eqref{e:phi}.
    \begin{enumerate}
    \item For any $u \in [0,1]$ and $1\leq |h| \leq N$, we have 
    \[
    \sum_{N/2 < n \leq t} \phi(n/N) e(n \theta - h(n+u)^{1/c})\lesssim N^{\frac{1}{2c}}|h|^{\frac{1}{2}} + N^{1-\frac{1}{2c}}|h|^{-\frac{1}{2}}.
    \]
    \item For any $u_1, u_2 \in [0,1]$, $1\leq |h_2| \leq |h_1| \leq N$ and $1<N_0 \leq N$, we have 
    \begin{align*}
     & \sum_{N/2<n \leq t} \phi(n/N) e(n\theta + h_1(n+u_1)^{\frac{1}{c}} + h_2(n+x+u_2)^{\frac{1}{c}})  \\
     & \lesssim N_0 + |h_1| |h_2|^{-\frac{1}{2}} \cdot N^{1+\frac{1}{2c}} \cdot |x|^{-\frac{1}{2}} \cdot N_0^{-\frac{1}{2}} + |h_2|^{-\frac{1}{2}} \cdot N^{2-\frac{1}{2c}} \cdot |x|^{-\frac{1}{2}} \cdot N_0^{-\frac{1}{2}} .
    \end{align*}
    \item Let $N^{2-\frac{2}{c}}<H \leq N$. For any $u_1, u_2 \in [0,1]$, if  $1 \lesssim (1+ \frac{100 |x|}{N})|h_2| \leq |h_1| \lesssim H $, then we have 
    \[
    \sum_{N/2<n \leq t} \phi(n/N) e(n\theta + h_1(n+u_1)^{\frac{1}{c}} + h_2(n+x+u_2)^{\frac{1}{c}})  \lesssim H^{\frac{1}{2}}N^{\frac{1}{2c}} .
    \]
    \end{enumerate}
\end{lemma}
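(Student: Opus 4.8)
\textbf{Proof plan for Lemma \ref{l:expsumh1h2u1u2}.}

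The plan is to apply van der Corput's Lemma (Lemma \ref{Vdcptbound}) to each sum after reducing the smooth weight $\phi(n/N)$ to constant weights via partial summation, exactly as suggested by the remark preceding Lemma \ref{l:spdexpestimate}. Since $\phi$ satisfies the uniform $C^2$ bounds \eqref{e:phi} and $N/2<n\le N$, the function $n\mapsto\phi(n/N)$ has derivative $O(1/N)$ and variation $O(1)$ on the range of summation, so partial summation introduces only a harmless $O(1)$ multiplicative loss and we may assume the weight is $\mathbf{1}_{(N/2,t]}(n)$ throughout. For part (1), I would set $F(n) := n\theta - h(n+u)^{1/c}$. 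The linear term $n\theta$ contributes nothing to the second derivative, so $F''(n) = -h\cdot\frac{1}{c}(\frac1c-1)(n+u)^{1/c-2}$, and since $n\asymp N$ and $1<c<2$ (so $\frac1c-2<0$), we get $|F''(n)|\asymp |h|\, N^{1/c-2}$ uniformly on $(N/2,t]$. Thus we may take $\lambda\asymp |h|N^{1/c-2}$ and $v\asymp 1$ in Lemma \ref{Vdcptbound}, with $|I|\le N/2$, yielding
\[
\Big|\sum_{N/2<n\le t}e(F(n))\Big| \lesssim N\cdot\big(|h|N^{1/c-2}\big)^{1/2} + \big(|h|N^{1/c-2}\big)^{-1/2} = N^{1/(2c)}|h|^{1/2} + N^{1-1/(2c)}|h|^{-1/2},
\]
which is the claimed bound (after restoring the $O(1)$ weight factor).

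For part (2), the obstacle is the presence of the second monomial $h_2(n+x+u_2)^{1/c}$: when $x$ is small the two phase functions nearly coincide and van der Corput applied directly to $F(n) := n\theta + h_1(n+u_1)^{1/c} + h_2(n+x+u_2)^{1/c}$ is lossy because $F''$ can be small. The remedy, and the reason the parameter $N_0$ and the hypothesis $|h_2|\le|h_1|$ appear, is to first split the range $(N/2,t]$ into $\lesssim N/N_0$ consecutive blocks of length $\le N_0$; on each such block the ``trivial'' bound $N_0$ is available, so I only need a nontrivial estimate on blocks where $F''$ is genuinely large. Writing
\[
F''(n) = \tfrac1c(\tfrac1c-1)\big(h_1(n+u_1)^{1/c-2} + h_2(n+x+u_2)^{1/c-2}\big),
\]
one expands $(n+x+u_2)^{1/c-2} = (n+u_1)^{1/c-2}(1+O(|x|/N))$ and uses $|h_2|\le|h_1|$; the dominant behavior of $F''$ on a block at distance $\asymp D$ from the "resonant" location is governed by the $h_2$-term once we subtract the near-cancellation, giving $|F''(n)|\gtrsim |h_2|\, |x|\, N^{-3}$ on most blocks, while the upper bound $|F''(n)|\lesssim |h_1| N^{1/c-2}$ holds everywhere, so $v\asymp |h_1|/(|h_2||x|N^{1/c-1})$ (or a refined per-block version). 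Feeding $\lambda\asymp |h_2||x|N^{-3}$, this $v$, and $|I|\le N_0$ into Lemma \ref{Vdcptbound} on each block and summing the $\lesssim N/N_0$ blocks produces the two error terms $|h_1||h_2|^{-1/2}N^{1+1/(2c)}|x|^{-1/2}N_0^{-1/2}$ and $|h_2|^{-1/2}N^{2-1/(2c)}|x|^{-1/2}N_0^{-1/2}$; the leading $N_0$ absorbs the contribution of the $O(1)$ exceptional blocks where the curvature estimate degenerates.

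The main obstacle I expect is precisely the bookkeeping in part (2): getting the correct lower bound on $|F''|$ requires carefully tracking where the two terms $h_1(n+u_1)^{1/c-2}$ and $h_2(n+x+u_2)^{1/c-2}$ can cancel, since $h_1,h_2$ may have opposite signs, and then verifying that the blockwise application of van der Corput with the crude parameters $\lambda$ and $v$ sums to exactly the stated shape. One should be slightly careful that $v\ge 1$ is required in Lemma \ref{Vdcptbound}, which is why the $|h_1|/|h_2|$ ratio (at least $1$) shows up as a genuine loss in the first error term; the $|x|^{-1/2}$ and $N_0^{-1/2}$ gains are exactly what one reads off from $\lambda^{-1/2}$ with $\lambda\asymp|h_2||x|N^{-3}$. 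Everything else is a routine monomial-exponent computation using $1<c<2$ to control signs of the powers $\frac1c-1<0$ and $\frac1c-2<0$.
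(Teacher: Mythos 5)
Your reduction to constant weights and your treatment of part (1) match the paper exactly, and you correctly identify the core difficulty in part (2): the phases $h_1(n+u_1)^{1/c}$ and $h_2(n+x+u_2)^{1/c}$ can conspire to make $F''$ small, and the parameter $N_0$ is there to isolate the dangerous region.

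However, your execution of part (2) has a genuine gap. The paper does \emph{not} chop $(N/2,t]$ into $\sim N/N_0$ blocks of size $N_0$ and apply van der Corput on each block. It writes $g''(n)\asymp h_1 n^{1/c-2} g_1(n)$ with $g_1(n)=1+\frac{h_2}{h_1}(1+\frac{x}{n})^{1/c-2}$, observes $g_1$ is monotone on the range so that $|g_1|$ has a unique minimizer $n_0$, and excises a \emph{single} window $\{|n-n_0|\le N_0\}$, which contributes $O(N_0)$. On the complement it has the uniform lower bound $|g_1(n)|\gtrsim |g_1(n)-g_1(n_0)|\gtrsim\frac{|h_2|}{|h_1|}\cdot\frac{|x|N_0}{N^2}$ by the Mean Value Theorem, so $|g''(n)|\gtrsim \lambda := |h_2|\,|x|\,N_0\,N^{1/c-4}$, with upper bound $v\lambda\asymp |h_1|N^{1/c-2}$, i.e.\ $v\asymp \frac{|h_1|N^2}{|h_2|\,|x|\,N_0}$. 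A single application of Lemma \ref{Vdcptbound} with $|I|\lesssim N$ then gives $v\,N\,\lambda^{1/2}+\lambda^{-1/2}$, which is exactly the two stated error terms.

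Your version has two concrete problems. First, the lower bound you quote, $\lambda\asymp|h_2||x|N^{-3}$, has no $N_0$-dependence and the wrong power of $N$ (the $N^{1/c-2}$ prefactor from $n^{1/c-2}$ has been dropped); it is the $N_0$ in $\lambda$ that ultimately produces the $N_0^{-1/2}$ in both error terms. Second, applying van der Corput block-by-block on $\sim N/N_0$ blocks of length $N_0$ and summing yields $\sum_j\big(v_jN_0\lambda_j^{1/2}+\lambda_j^{-1/2}\big)$, and the $\lambda_j^{-1/2}$ contributions sum to roughly $(N/N_0)^{1/2}\cdot\lambda^{-1/2}$ rather than the single $\lambda^{-1/2}$ one needs; this is larger than the target by a factor $(N/N_0)^{1/2}$, so the claimed bound does not follow. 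You should replace the block decomposition by a single excision around $n_0$ (justified by the monotonicity of $g_1$) and then apply van der Corput once to each of the (at most two) remaining intervals.
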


\begin{proof}
By the regularity of $\phi$, we may reduce to the case of constant amplitude by summation by parts; we focus on the unweighted case in what follows:

Since the first point follows from Lemma \ref{Vdcptbound} directly, we turn to point $(2)$.

Let 
\[g(n)=n \theta + h_1(n+u_1)^{\frac{1}{c}} + h_2(n+x+u_2)^{\frac{1}{c}}.
\]
By a simple calculation, we have 
\[
g''(n) = c^{-1}(c^{-1} - 1) (h_1(n+u_1)^{\frac{1}{c}-2} + h_2(n+x+u_2)^{\frac{1}{c}-2}),
\]
so
\[
|g''(n)| \lesssim |h_1|N^{\frac{1}{c}-2}.
\]
In order to apply Lemma \ref{Vdcptbound}, we need to give a lower bound for $|g''(n)|$.
Let 
\[
g_1(n)=1 + \frac{h_2}{h_1}(1+\frac{x}{n})^{
\frac{1}{c}-2},\quad \text{and define $n_0$ via} \quad |g_1(n_0)|:=\min_{N<n \leq 2N}|g_1(n)|.
\]
Thus,
\[
|g''(n)| \approx |h_1n^{\frac{1}{c}-2}g_1(n)|.
\]

First, since $n_0, n \in (N,2N]$, we have
\[
|g_1(n) - g_1(n_0)| \gtrsim \frac{|h_2|}{|h_1|}|x| \frac{|n-n_0|}{N^2} 
\]
by the Mean-Value Theorem; below, we will only be interested in the case where
\[ |n-n_0| \geq N_0\]
and will just trivially bound
\begin{align}
    |\sum_{|n-n_0| \lesssim N_0} e(g(n))| \lesssim N_0.
\end{align}

In particular, when 
\[ |g_1(n_0)| \ll \frac{|h_2|}{|h_1|}|x| \frac{N_0}{N^2},\] this implies
\begin{align}
    |g_1(n)| \gtrsim \frac{|h_2|}{|h_1|}|x| \frac{|n-n_0|}{N^2},
\end{align}
and thus
\begin{align}
    |g''(n)| \gtrsim |h_2| |x| N^{1/c-4} |n-n_0|.
\end{align}
On the other hand, if 
\[ |g_1(n_0)| \gtrsim \frac{|h_2|}{|h_1|}|x| \frac{N_0}{N^2}\]
then
\begin{align}
    |g''(n)| \gtrsim |h_2| |x| N_0 N^{1/c-4}.
\end{align}

The upshot is that whenever $|n-n_0| \gg N_0$, we may bound
\begin{align}\label{e:approxphase}
    |h_2| |x| N_0 N^{1/c-4} \lesssim |g''(n)| \lesssim |h_1| N^{1/c-2} = 
    \big( |h_2| |x| N_0 N^{1/c-4} \big) \cdot \frac{|h_1| N^2 }{|h_2 x| N_0}.
\end{align}
So, point $(2)$ above follows directly from van der Corput's Lemma \ref{Vdcptbound}.
We focus on the third point. Write
\[
g(n)= n \theta + (h_1+h_2)(n+u_1)^{\frac{1}{c}} + h_2\big( (n+x+u_2)^{\frac{1}{c}}-(n+u_1)^{\frac{1}{c}} \big).
\]
By the triangle inequality $|h_1+h_2| \geq |h_1|-|h_2| \geq \frac{100|x|}{N}|h_2|$, and by the mean-value theorem,
\[
|h_2[(n+x+u_2)^{\frac{1}{c}}-(n+u_1)^{\frac{1}{c}}]| \leq 5|h_2| x n^{\frac{1}{c}-1}\leq \frac{1}{10}|(h_1+h_2)(n+u_1)^{\frac{1}{c}}|.
\]
Therefore, 
\[ |g(n) - \big(  n \theta + (h_1+h_2)(n+u_1)^{\frac{1}{c}} \big)| \leq \frac{1}{5} | (h_1+h_2)(n+u_1)^{\frac{1}{c}}|,\]
and 
\[
g''(n) \approx ((h_1+h_2)(n+u_1)^{\frac{1}{c}-2}.
\]
We apply Lemma \ref{Vdcptbound} to get
\begin{align*}
|\sum_{N/2<n \leq t} e(n\theta + h_1(n+u_1)^{\frac{1}{c}} + h_2(n+x+u_2)^{\frac{1}{c}})|  \lesssim H^{\frac{1}{2}}N^{\frac{1}{2c}}.
\end{align*}

\end{proof}

The following is well known, see e.g.\ \cite[\S 2]{HB}.
\begin{lemma}\label{fracPrt}
Let $\psi(t) = \{x\} - \frac{1}{2}$. For each $H \geq 2$ and $t \in \mathbb{R}$, we have
\begin{align}
\psi(t) & = - \frac{1}{2 \pi i} \sum_{|h|=1}^{\infty} \frac{e(ht)}{h}\\
& =-\frac{1}{2 \pi i} \sum_{|h| \leq H} \frac{e(ht)}{h} + O(\min\{1, \frac{1}{H \|t\|}\}).
\end{align}
Furthermore,
\begin{equation}\label{Errfrc}
\min\{1, \frac{1}{H\|t\|}\}= \sum_{h= -\infty}^{+\infty}b_{h} e(ht),
\end{equation}
where 
\[
b_h \lesssim \min \{\frac{\log H}{H}, \frac{H}{h^2}\}.
\]
\end{lemma}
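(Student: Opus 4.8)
The plan is to treat the three assertions in turn; all are classical facts about the periodized sawtooth, and since the paper labels the lemma ``well known'' and cites \cite[\S 2]{HB}, I would keep the write-up short.

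For the first identity, I would compute the Fourier coefficients of the $1$-periodic function $\psi$: for $h \neq 0$, one integration by parts gives $\int_0^1 (\{t\} - \tfrac12)\, e(-ht)\, dt = \int_0^1 (t - \tfrac12)\, e(-ht)\, dt = -\tfrac{1}{2\pi i h}$, while the $h = 0$ coefficient vanishes. Since $\psi$ has bounded variation on the circle, Dirichlet's theorem on pointwise convergence of Fourier series shows that the symmetric partial sums $-\tfrac{1}{2\pi i}\sum_{1 \le |h| \le H} \tfrac{e(ht)}{h}$ converge to $\psi(t)$ at every non-integer $t$ (and to $0$ at integers), which is the stated expansion. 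For the truncation statement, write the error as the symmetric tail $R_H(t) := -\tfrac{1}{2\pi i}\sum_{|h| > H}\tfrac{e(ht)}{h}$. The bound $R_H(t) = O(1)$ follows from $\psi$ being bounded together with the classical uniform boundedness of the partial sums $\sum_{h \le K}\tfrac{\sin(2\pi h t)}{h}$ (equivalently of $\int_0^x \tfrac{\sin u}{u}\, du$); the bound $R_H(t) = O(\tfrac{1}{H\|t\|})$ follows by Abel summation applied to $\sum_{H < h \le K}\tfrac{e(ht)}{h}$, using the geometric estimate $|\sum_{h \le K} e(ht)| \lesssim \|t\|^{-1}$ and the monotonicity of $1/h$, and then letting $K \to \infty$. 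Taking the minimum of the two bounds gives the claim.

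For the last statement, observe that $f_H(t) := \min\{1, \tfrac{1}{H\|t\|}\}$ is even, $1$-periodic, continuous, and piecewise $C^2$, equal to $1$ on $|t| \le 1/H$ and to $\tfrac{1}{H\|t\|}$ elsewhere; hence $b_h = \widehat{f_H}(h)$ is real, even in $h$, and
\[
b_h = 2\int_0^{1/H}\cos(2\pi h t)\, dt + \frac{2}{H}\int_{1/H}^{1/2}\frac{\cos(2\pi h t)}{t}\, dt .
\]
The first term is $O(1/H)$; for the second, substituting $u = 2\pi h t$ and using $|\int_a^b \tfrac{\cos u}{u}\, du| \lesssim \log(1/a)$ for $a \le 1$ bounds it by $O(\tfrac{\log H}{H})$, which gives the first estimate on $b_h$. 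For the second estimate I would integrate by parts twice: the periodized $f_H'$ has jumps of size $O(H)$ at $t = \pm 1/H$ and at $t = 1/2$, and $\|f_H''\|_{L^1[0,1]} \lesssim H^{-1}\int_{1/H}^{1/2} t^{-3}\, dt \lesssim H$, so the resulting boundary terms and remaining integral are each $O(H)$, yielding $|b_h| \lesssim H/h^2$.

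The main obstacle is the bookkeeping in the double integration by parts for the $H/h^2$ bound: correctly locating the corners of the periodized $f_H$, checking the jump sizes of $f_H'$, and bounding $\|f_H''\|_{L^1}$, together with the dichotomy $a \lessgtr 1$ for the oscillatory integral $\int \tfrac{\cos u}{u}\, du$ in the $\tfrac{\log H}{H}$ bound. Everything else is textbook Fourier analysis of a bounded-variation function.
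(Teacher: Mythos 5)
Your proof is correct and is the standard argument for this lemma; the paper itself gives no proof, simply citing Heath-Brown \cite[\S 2]{HB}, and your write-up (Fourier coefficients of the sawtooth plus Dirichlet's pointwise convergence criterion for the first identity, uniform boundedness of $\sum \sin(2\pi ht)/h$ and Abel summation against the geometric-sum bound $|\sum_{h\le K}e(ht)|\lesssim\|t\|^{-1}$ for the tail, and a direct computation plus double integration by parts for the coefficients $b_h$) is precisely what that reference contains. One tiny quibble: the jump of $f_H'$ at $t=1/2$ has size $O(1/H)$, not $O(H)$, but since you only need an upper bound this does not affect the estimate $|b_h|\lesssim H/h^2$.
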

Combining Lemma \ref{fracPrt} with Lemma \ref{l:expsumh1h2u1u2}, we will obtain the following lemma.

\begin{lemma}\label{l:min1Hn1c}
Let $N$ be a sufficiently large integer and set $H := N^{2-2/c + \delta}$ with any sufficiently small $\delta=\delta(c)>0$. Then for any $1<c<6/5$ and $0 \leq u < 1$,
\[
\sum_{N< n \leq 2N} \min\{1, \frac{1}{H \|(n+u)^{\frac{1}{c}}\|}\} \lesssim N^{\frac{2}{c}-1-\frac{\delta}{2}}.
\]
\end{lemma}

\begin{proof}
By applying Lemma \ref{fracPrt} and Lemma \ref{l:expsumh1h2u1u2} (1), 
\begin{align}\label{N11c1Hn1c}
&\sum_{N< n \leq 2N}\min\bigg\{1,\frac{1}{H\|(n+u)^{\frac1c}\|}\bigg\}\notag\\
=& \sum_{h=-\infty}^\infty b_h\sum_{N< n \leq 2N} e(h(n+u)^{\frac1c})\lesssim
\frac{N \log H }{H} + \sum_{|h| \geq 1} |b_h|\cdot \sum_{N< n \leq 2N} e(h(n+u)^{\frac1c})\\
= & \frac{N}{H}\log H + \sum_{1 \leq |h| \leq N^{1-\frac{1}{c}}}\frac{\log H}{H} | \sum_{N< n \leq 2N} e(h(n+u)^{\frac1c}) | \\
& \qquad + \sum_{|h| > N^{1-\frac{1}{c}}}\min\left\{\frac{\log H}{H},\frac{H}{h^2} \right\} |\sum_{N< n \leq 2N} e(h(n+u)^{\frac1c}) | \\
\lesssim & N^{\frac{2}{c}-1-\frac{\delta}{2}} + N^{\frac{1}{c}-\frac{1}{2} - \frac{\delta}{2}} + N^{1-\frac{1}{2c} + \frac{3\delta}{4}} \lesssim N^{\frac{2}{c}-1-\frac{\delta}{2}}.
\end{align}
Therefore, the claim follows from $1<c<6/5$.
\end{proof}

It is time to estimate $A_N*\tilde{A}_{N}(x)$; the Fourier inversion identity
\begin{equation}\label{e:Fcovn^c}
A_{N}*\tilde{A}_{N}(x) = \int_{0}^{1} \widehat{A_N}(\beta)\widehat{A_N}(-\beta)e(\beta x) d \beta 
\end{equation}
will be used without comment.

Beginning with the expansion of the indicator function
\begin{align}\label{e:sawtooth}
\mathbf{1}_{n \in \mathbb{N}_c} &= \lfloor-n^{\frac{1}{c}} \rfloor-\lfloor-(n+1)^{\frac{1}{c}}\rfloor \\
& \qquad = \big( (n+1)^{1/c} - n^{1/c} \big) + \big( \psi(-(n+1)^{1/c}) - \psi(-n^{1/c}) \big), 
\end{align}
we use Lemma \ref{fracPrt} to express
\begin{align}
\notag & \varphi(N)\widehat{A_N}(\beta)\\ \notag
& = \sum_{(N/2)^{1/c} <n \leq N^{1/c}} e(\lfloor n^c \rfloor \beta)  \\ \notag
& = \sum_{N/2 < m \leq N} e(m \beta)  (\lfloor - m^{\frac{1}{c}}\rfloor - \lfloor - (m+1)^{\frac{1}{c}}\rfloor)\\ \notag
& = \sum_{N/2 < m \leq N} e(m \beta)  (\psi(-(m+1)^{\frac{1}{c}}) - \psi(-m^{\frac{1}{c}})) + \sum_{N/2 < m \leq N} e(m \beta)  ((m+1)^{\frac{1}{c}} - m^{\frac{1}{c}})\\ \notag
& = \frac{1}{c N^{1-1/c}}\sum_{N/2 < m \leq N} e(m \beta) \phi_\alpha (m/N)  + 
\sum_{N/2 < m \leq N} e(m \beta) (\psi(-(m+1)^{\frac{1}{c}}) - \psi(-m^{\frac{1}{c}})) \\ \notag
& \qquad \qquad \qquad \qquad + \widehat{E_N}(\beta)\\ \notag
&  =  \frac{N^{1/c}}{c} \widehat{B_N}(\beta)  - \frac{1}{2\pi i} \sum_{N/2 < m \leq N} e(m \beta) \sum_{1 \leq |h| \leq H} \frac{1}{h}(e(-h(m+1)^{\frac{1}{c}}) - e(-hm^{\frac{1}{c}})) \\ \notag
& \qquad \qquad \qquad - \frac{1}{2\pi i} \sum_{N/2 < m \leq N} e(m \beta) \sum_{|h|> H} \frac{1}{h}(e(-h(m+1)^{\frac{1}{c}}) - e(-hm^{\frac{1}{c}})) + \widehat{E_N}(\beta) \\ 
&  =:\widehat{f_{N,s}}(\beta) +\widehat{f_{N, 1}}(\beta) + \widehat{f_{N, 2}}(\beta) + \widehat{E_{N}}(\beta), \label{e:FourierDecompose4cases}
\end{align}
where
\begin{align}\label{e:smallptwise}
|E_N| \lesssim N^{1/c-2} \mathbf{1}_{(N/2,N]}
\end{align}
is an error term, with the gain coming from the second order Taylor expansion of $t \mapsto (m+t)^{1/c}$. In particular, we have emphasize the decomposition
\begin{align}
   c n^{1-1/c} \mathbf{1}_{\mathbb{N}_c} = 1 + f_{N,1} + f_{N,2} + E_N.
\end{align}

By choosing $H = N^{2-2/c + \delta}$ with sufficiently small $\delta>0$ and applying Lemma \ref{l:expsumh1h2u1u2} (1) and Lemma \ref{l:min1Hn1c}, respectively, we have the following uniform bounds,
\[
\widehat{f_{N, 1}}(\beta) \lesssim N^{1-\frac{1}{2c} }, \; \; \; \widehat{f_{N,2}}(\beta) \lesssim N^{\frac{2}{c}-1-\delta}.
\]

By the above arguments, we may expect that the main term will come from ${B_N}$ and the rest will contribute errors. Before estimating the error terms of $A_N*\tilde{A}_{N}(x)$, let us prove the following facts:
\begin{lemma}\label{l:4pptfdet}
Let $f_{N,s}, f_{N,1}, f_{N,2}$ be as before and $N$ be sufficiently large, $1<c<6/5$ and $H=N^{2-2/c+\delta}$ with sufficiently small $\delta>0$. We have the following bounds:
\begin{enumerate}
    \item $
    \| \widehat{f_{N, s}} \|_{L^1(\mathbb{T})} \lesssim N^{\frac{1}{c}-1} \log N
    $;
    \item $ 
    |f_{N, 1}*\tilde{f}_{N,1}(x)| \lesssim N^{1-\frac{1}{3c}+\delta}$ whenever $N^{1/c} \lesssim |x| \leq N$; and
    \item  $f_{N,i}$ are supported on $(N/2,N]$ and satisfy the pointwise bound
    \[ |f_{N,1}(x)| \lesssim \log N, \; \; \; |f_{N,2}(x)| \lesssim \sum_{u \in \{0,1\}} \min\{1,\frac{1}{H\|(x+u)^{1/c}\|} \}.\]
\end{enumerate}
Consequently, 
\[ |f_{N,s}*\tilde{f}_{N,i}| \lesssim N^{2/c-1-\delta/3}  \qquad\text{and} \qquad |f_{N,s}*\tilde{E}_N| \lesssim N^{2/c-2} \]
and
for each $i = 1,2$
\[ |f_{N,2}*\tilde{f}_{N,i}| + |E_N*\tilde{f}_{N,i}| + |E_N*\tilde{E}_N| \lesssim   N^{2/c-1-\delta/3}. \]
\end{lemma}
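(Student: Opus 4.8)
The plan is to prove each of the three enumerated bounds by reducing, via summation by parts in the amplitude $\phi$, to the unweighted exponential sums controlled by Lemma \ref{l:expsumh1h2u1u2} and Lemma \ref{l:min1Hn1c}, and then to deduce the consequences by H\"older's inequality together with the support and pointwise information already gathered. For \textbf{(1)}, since $\widehat{f_{N,s}}(\beta) = \frac{N^{1/c}}{c}\widehat{B_N}(\beta)$ up to the normalization built into \eqref{e:detANBN}, the $L^1(\mathbb{T})$ norm is $\lesssim N^{1/c-1}$ times $\|\widehat{B_N^{\phi_\alpha}}\|_{L^1(\mathbb{T})}$; the logarithmic loss comes from the fact that $B_N$ is a normalized average of $\lesssim N$ point masses on an interval of length $N$, so $\widehat{B_N}$ is, after rescaling, a smoothed Dirichlet-type kernel whose $L^1$ norm is $O(\log N)$. (One expands $B_N$ as a sum of $N/2$ exponentials $e(n\beta)$, applies the standard $\frac{1}{\|\beta\|}$ pointwise bound on the partial sums together with the $\phi_\alpha$-regularity, and integrates.)

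For \textbf{(2)}, expand $f_{N,1}*\tilde f_{N,1}(x)$ by Fourier inversion as an integral over $\beta$ of $\widehat{f_{N,1}}(\beta)\widehat{f_{N,1}}(-\beta)e(\beta x)$; unfolding the definition of $f_{N,1}$ gives a quadruple sum over $1\le|h_1|,|h_2|\le H$ and over the two shift parameters, of exponential sums of exactly the shape treated in Lemma \ref{l:expsumh1h2u1u2}(2) — after using the integral representation in $t$ exactly as was done for $\widehat{f_{N,1}}$ to turn each $e(-h(m+1)^{1/c})-e(-hm^{1/c})$ into $h\int_0^1 (m+t)^{1/c-1}e(-h(m+t)^{1/c})\,dt$, producing the extra factor $|h|$ that appears in the bound of part (2) of that lemma. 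One then sums the three terms of Lemma \ref{l:expsumh1h2u1u2}(2) over $h_1,h_2$ (the ranges $|h_2|\le|h_1|\le H=N^{1-1/c}$ keep all sums convergent up to $\log$ losses) and optimizes the free parameter $N_0$: balancing $N_0$ against $N_0^{-1/2}$-type terms yields the power $|x|^{-1/3}$ and the exponent $4/3-1/3c$, with the two $\log N$ factors absorbing the $h$-summations.

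For \textbf{(3)}, the support statement is immediate from the definitions \eqref{e:FourierDecompose4cases}, and the pointwise bound on $f_{N,1}$ follows because $f_{N,1}(x)$ is, up to constants, $\sum_{1\le|h|\le H}\frac1h\big(\mathbf 1[\lfloor-x^{1/c}\rfloor\ \text{jumps}]\big)$-type expression; more honestly, $f_{N,1}+f_{N,2}$ together equal the sawtooth difference $\psi(-(x+1)^{1/c})-\psi(-x^{1/c})$ which is $O(1)$, and the partial-sum tail $f_{N,2}$ is governed by the $\min\{1,\frac{1}{H\|(x+u)^{1/c}\|}\}$ error term of Lemma \ref{fracPrt}, so $f_{N,1}$ inherits an $O(\log H)=O(\log N)$ bound from the truncated Fourier series of the sawtooth. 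The stated consequences then follow mechanically: $|f_{N,s}*f_{N,i}|\le \|f_{N,s}\|_{\ell^1}\|f_{N,i}\|_{\ell^\infty}$ where $\|f_{N,s}\|_{\ell^1}\lesssim N\cdot N^{1/c-1}=N^{1/c}$ combined with $\|f_{N,i}\|_{\ell^\infty}$ — wait, one must instead use $\|\widehat{f_{N,s}}\|_{L^1}\lesssim N^{1/c-1}\log N$ from part (1) and $\|f_{N,i}\|_{\ell^\infty}$ from part (3) via $|f_{N,s}*g(x)|\le \|\widehat{f_{N,s}}\|_{L^1(\mathbb T)}\|g\|_{\ell^\infty}$; for the terms involving $\tilde f_{N,2}$, $E_N$, and $\tilde E_N$ one uses the $\ell^1$ bound $\|f_{N,2}\|_{\ell^1([N/2,N])}\lesssim N^{1/2}\log N$ from Lemma \ref{l:min1Hn1c}, the crude $\|E_N\|_{\ell^1}\lesssim N^{1/c-1}$ from \eqref{e:smallptwise}, and the $\ell^\infty$ bounds in part (3), pairing an $\ell^1$ factor with an $\ell^\infty$ factor via Young's inequality in each case.

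The main obstacle is part \textbf{(2)}: getting the precise exponents $4/3-1/3c$ and $|x|^{-1/3}$ requires carefully organizing the double sum over $h_1,h_2$ so that Lemma \ref{l:expsumh1h2u1u2}(2) is applied with the right splitting (the factor $|h_1||h_2|^{-1/2}$ must be summed against $|h_1|\le H$, which is where the $N^{1-1/c}$ budget is spent) and then choosing $N_0$ optimally as a function of $|x|$, $N$, and the $h$'s — with the subtlety that $N_0$ must satisfy $1<N_0\le N$, so the optimization is constrained and one may need to treat the regime of very small or very large $|x|$ separately, falling back on the trivial bound $|f_{N,1}*\tilde f_{N,1}(x)|\lesssim \|f_{N,1}\|_{\ell^1}\|f_{N,1}\|_{\ell^\infty}\lesssim N\log^2 N$ when $|x|$ is too small for the oscillatory estimate to beat it. Everything else is bookkeeping with H\"older/Young and the already-established lemmas.
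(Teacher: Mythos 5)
Your treatment of the three enumerated bounds essentially coincides with the paper's: part (1) via summation/integration by parts to a smoothed Dirichlet kernel, part (2) via Fourier inversion, unfolding the $h$-sums, applying Lemma~\ref{l:expsumh1h2u1u2}(2), and optimizing $N_0$ (your observation that the optimization is constrained to $1<N_0\le N$ and that small $|x|$ needs a trivial fallback is a legitimate point the paper glosses over), and part (3) via the structure of the truncated sawtooth expansion together with Lemma~\ref{fracPrt}.

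There is, however, a genuine error in how you deduce the first consequence $|f_{N,s}*f_{N,i}|\lesssim N^{1/c-1/2}\log^i N$. The ``corrected'' inequality you propose, $|f_{N,s}*g(x)|\le \|\widehat{f_{N,s}}\|_{L^1(\mathbb{T})}\,\|g\|_{\ell^\infty}$, is false: taking $f_{N,s}=g=\mathbf{1}_{[1,N]}$ gives $(f_{N,s}*g)(N)=N$ while $\|\widehat{\mathbf{1}_{[1,N]}}\|_{L^1(\mathbb{T})}\approx\log N$. Fourier inversion gives $|f*g(x)|\le\|\widehat{f}\widehat{g}\|_{L^1(\mathbb{T})}$, so after H\"older you must pair $\|\widehat{f_{N,s}}\|_{L^1(\mathbb{T})}$ with $\|\widehat{f_{N,i}}\|_{L^\infty(\mathbb{T})}$, not with $\|f_{N,i}\|_{\ell^\infty}$. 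The $L^\infty(\mathbb{T})$ bounds needed here are precisely the uniform estimates $\widehat{f_{N,1}}(\beta)\lesssim N^{1/2}$, $\widehat{f_{N,2}}(\beta)\lesssim N^{1/2}\log N$ recorded just before the lemma (obtained from Lemma~\ref{l:expsumh1h2u1u2}(1) and Lemma~\ref{l:min1Hn1c}); these supply the missing factor $N^{1/2}$, giving $N^{1/c-1}\log N\cdot N^{1/2}\log^{i-1}N = N^{1/c-1/2}\log^i N$. Your pointwise $\ell^\infty$ bound on $f_{N,i}$ from part (3) does not produce the correct exponent, and the physical-side Young pairing $\|f_{N,s}\|_{\ell^1}\|f_{N,i}\|_{\ell^\infty}\approx N^{1/c}\log^{\cdot} N$ that you first wrote loses the full $N^{1/2}$. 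The remaining consequences involving $f_{N,2}$, $E_N$, and $\tilde E_N$, which you handle by pairing a genuine $\ell^1$ bound (from Lemma~\ref{l:min1Hn1c} or \eqref{e:smallptwise}) with a genuine $\ell^\infty$ bound from part (3), are fine.
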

\begin{proof}
The pointwise bounds are a consequence of direct computation and the Riemann-Lebesgue Lemma, to bound, for each $i = 1,2$, noting that $1-\frac{1}{2c}<\frac{2}{c}-1$,
\begin{align}
    \| f_{N,s} *f_{N,i} \|_{\ell^{\infty}(\mathbb{Z})} \leq \| \widehat{f_{N,s}} \widehat{f_{N,i}} \|_{L^1(\mathbb{T})}  \leq \| \widehat{f_{N,s}} \|_{L^1(\mathbb{T})} \| \widehat{f_{N,i}} \|_{L^{\infty}(\mathbb{T})}\\
    \lesssim   N^{2/c-1-\delta/3}.
\end{align}
Thus, we focus on the first three points; the first is a standard Dirichlet kernel argument and the third is straightforward, so it remains to address the first two.

Let
\[K_N(h_1,h_2;x) := \sum_{u,v \in \{0,1\}} \big| \sum_{N/2 < x+m,m \leq N} e(h_2(m+x+u)^{\frac{1}{c}} + h_1 (m+v)^{\frac{1}{c}}) \big| \]
To prove (2), we first note that  
\begin{equation}\label{e:f_1_convolve_f_1}
|f_{N, 1} * \tilde{f}_{N ,1}(x)| \lesssim \sum_{1 \leq |h_2| \leq |h_1| \leq H} \frac{1}{|h_1| |h_2|} K_N(h_1,h_2;x)
\end{equation}
In order to apply Lemma \ref{l:expsumh1h2u1u2} efficiently, we decompose the right-hand-side of \eqref{e:f_1_convolve_f_1} into different pieces according to the size of $|h_1|$ and $|h_2|$. The right-hand-side of \eqref{e:f_1_convolve_f_1} is bounded by 
\begin{align*}
& \left( \sum_{\substack{1 \leq |h_2| \leq |h_1| \leq  H \\ 1 \leq |h_2| \leq \frac{N}{100|x|}}} + \sum_{\frac{100N}{|x|} \leq |h_2| \leq |h_1| \leq  H } \right) \frac{1}{|h_1| |h_2|} K_N(h_1,h_2;x)\\
&= \left( \sum_{\substack{1 \leq |h_2| \leq |h_1| \leq  H \\ 1 \leq |h_2| \leq \frac{N}{100|x|}}} + \sum_{\substack{ \frac{N}{100|x|} \leq |h_2| \leq  H \\ 0 \leq |h_1|-|h_2| \leq \frac{100|x|}{N}|h_2|}} +  \sum_{\substack{ \frac{100N}{|x|} \leq |h_2| \leq  H \\ |h_1|-|h_2| > \frac{100|x|}{N}|h_2|}} \right) \frac{1}{|h_1| |h_2|} K_N(h_1,h_2;x)\\
&=: \Sigma_1 + \Sigma_2 + \Sigma_3
\end{align*}
We first bound $\Sigma_2$ and $\Sigma_3$ by using points $(2)$ and $(3)$ respectively from Lemma \ref{l:expsumh1h2u1u2}. With $N_0$ a free parameter, 
\[ H = N^{2-2/c+\delta}, \; \; \; N^{1/c} \leq |x| \leq N \]
we bound
\begin{align}
    \Sigma_2 \lesssim N_0 N^{\delta/10} + N^{3/2 - 1/2c + \delta} N_0^{-1/2};
\end{align}
indeed, we bound
\begin{align}\label{e:largexbad}
    \Sigma_2 &\lesssim N_0 \sum_{\substack{ \frac{N}{100|x|} \leq |h_2| \leq  H \\ 0 \leq |h_1|-|h_2| \leq \frac{100|x|}{N}|h_2|}} \frac{1}{|h_1h_2|} \\
    & + N^{1+1/2c} |x|^{-1/2} N_0^{-1/2} \sum_{\substack{ \frac{N}{100|x|} \leq |h_2| \leq  H \\ 0 \leq |h_1|-|h_2| \leq \frac{100|x|}{N}|h_2|}} \frac{1}{|h_2|^{3/2}} \\
    &  + N^{2-1/2c} |x|^{-1/2} N_0^{-1/2} \sum_{\substack{ \frac{N}{100|x|} \leq |h_2| \leq  H \\ 0 \leq |h_1|-|h_2| \leq \frac{100|x|}{N}|h_2|}} \frac{1}{|h_1| \cdot |h_2|^{3/2}} \\
    & \lesssim N_0 \log N + N^{1/2c} |x|^{1/2} N_0^{-1/2} H^{1/2} + N^{3/2-1/2c} N_0^{-1/2} \\
    & \lesssim N_0 \log N +  N^{3/2 - 1/2c + \delta/2} N_0^{-1/2},
\end{align}
so optimizing $N_0 = N^{1-1/3c}$ yields a bound
\begin{align}
    \Sigma_2 \lesssim N^{1-1/3c + \delta}.
\end{align}

And, we bound
\begin{align}
    \Sigma_3 \lesssim N^{1-1/2c + \delta},
\end{align}
as we may bound
\begin{align}
    K_N(h_1,h_2;x) \lesssim H^{1/2} N^{1/2c} = N^{1-1/2c+\delta/2}
\end{align}
inside the whole region by Lemma \ref{l:expsumh1h2u1u2}, and the summation introduces a factor of $(\log N)^2 \ll N^{\delta/2}$.

We now turn to bound $\Sigma_1$. If $h_1=-h_2$, then 
\[
K_N(h_1,h_2;x) = \sum_{u,v \in \{0,1\}} \big| \sum_{N/2 < x+m,m \leq N} e(h_2(m+x+u)^{\frac{1}{c}}- h_2 (m+v)^{\frac{1}{c}}) \big|.
\]
Since 
\[
|g''(m)|=|\Big( h_2 \big( (m+x)^{\frac{1}{c}}-  m^{\frac{1}{c}}\big) \Big)''| \approx |h_2| N^{\frac{1}{c}-3}|x|,
\]
by Lemma \ref{Vdcptbound},
\begin{align}
    K_N(-h_2,h_2;x) &\lesssim |h_2|^{1/2} N^{1/2c} + |h_2|^{-1/2} N^{3/2-1/c} \\
    & \lesssim N^{1-1/2c + \delta/2} + N^{3/2-1/c} \lesssim N^{3/2-1/c}
\end{align}
provided that $\delta = \delta(c) > 0$ is sufficiently small. Consequently,
\begin{align}
    \sum_{h_1 = -h_2, \ |h_2| \leq H} \frac{1}{|h_1h_2|} K_N(h_1,h_2;x) \lesssim N^{3/2-1/c}.
\end{align}

We now address the off-diagonal case; namely, we may bound $|h_1+h_2| \geq 1$.

By a simple computation, crucially using that $1 \leq |h_2| \leq \frac{N}{100 |x|}$ is so small and that we are in the off-diagonal regime, we have 
\[
|g''(m)|=|\big( h_1 (m+v)^{\frac{1}{c}}+h_2(m+x+u)^{\frac{1}{c}}\big)''| \approx |h_1 + h_2|N^{\frac{1}{c}-2}, 
\]
so by Lemma \ref{Vdcptbound}, 
\[
K_N(h_1,h_2;x) \lesssim N^{1-1/2c + \delta/2},
\]
so
\begin{align}
    \sum_{1 \leq |h_2| \leq |h_1| \leq H, \ 1 \leq |h_2| \leq N/100|x|} \mathbf{1}_{|h_1 + h_2| \geq 1} K_N(h_1,h_2;x) \lesssim N^{1-1/2c + \delta},
\end{align}
and thus
\[
\Sigma_1 \lesssim N^{3/2 -1/c},
\]
provided $\delta = \delta(c)> 0$ is sufficiently small.
\end{proof}

In particular, when $N^{1/c} \lesssim |x| \lesssim N$, we have established the following decomposition; the numerology from point (2) above is what determines our upper bound on $c$.

\begin{cor}
    Suppose that $N^{1/c} \lesssim |x| \lesssim N$ and $1 < c < 7/6$. Then there exists $\epsilon = \epsilon(c) > 0$ so that
    \[ A_N*\tilde{A_N}(x) = B_N*\tilde{B_N}(x) + O(N^{-\epsilon - 1}). \]
\end{cor}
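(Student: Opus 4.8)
The plan is to assemble the corollary directly from the Fourier decomposition in \eqref{e:FourierDecompose4cases} together with the convolution estimates collected in Lemma \ref{l:4pptfdet} and the regularity of $B_N$ recorded in Lemma \ref{e:Bsmooth}. Recall that $\varphi(N)\widehat{A_N} = \widehat{f_{N,s}} + \widehat{f_{N,1}} + \widehat{f_{N,2}} + \widehat{E_N}$, where $\widehat{f_{N,s}} = \tfrac{N^{1/c}}{c}\widehat{B_N}$, so that, dividing by $\varphi(N) = N^{1/c}$, we have $A_N = \tfrac{1}{c}B_N + \varphi(N)^{-1}(f_{N,1}+f_{N,2}+E_N)$. (The factor $1/c$ is absorbed into the normalization of $B_N$ in \eqref{e:detANBN}; I will suppress it below and just write $A_N = B_N + r_N$ with $r_N := \varphi(N)^{-1}(f_{N,1}+f_{N,2}+E_N)$.) Then
\[
A_N*\tilde{A_N} = B_N*\tilde{B_N} + B_N*\tilde{r_N} + r_N*\tilde{B_N} + r_N*\tilde{r_N},
\]
and the task reduces to showing each of the three cross/error terms is $O(N^{-\epsilon-1})$ pointwise on the range $N^{1/c}\lesssim|x|\lesssim N$.

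The term $r_N*\tilde{r_N} = \varphi(N)^{-2}\sum_{i,j}f_{N,i}*\tilde{f}_{N,j}$ (with $f_{N,3}:=E_N$) is handled entirely by the ``Consequently'' clause of Lemma \ref{l:4pptfdet}: the diagonal term $f_{N,1}*\tilde{f}_{N,1}$ obeys $\lesssim \log^2 N\cdot N^{4/3-1/(3c)}|x|^{-1/3}$ by point (2), and using $|x|\gtrsim N^{1/c}$ this is $\lesssim \log^2 N\cdot N^{4/3-1/(3c)-1/(3c)} = \log^2 N\cdot N^{4/3-2/(3c)}$; all remaining $f_{N,i}*\tilde{f}_{N,j}$ are $\lesssim \log^2 N\cdot N^{1/2}$. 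Dividing by $\varphi(N)^2 = N^{2/c}$, the worst contribution is $\lesssim \log^2 N\cdot N^{4/3-2/(3c)-2/c} = \log^2 N\cdot N^{4/3 - 8/(3c)}$, and one checks that $4/3 - 8/(3c) < -1$ exactly when $c < 8/7$ — this is the numerology flagged in the sentence preceding the corollary, and is where the hypothesis $c<8/7$ is consumed. For the cross terms $B_N*\tilde{r_N}$ (and symmetrically $r_N*\tilde{B_N}$), write $B_N*\tilde{r_N} = \varphi(N)^{-1}\sum_i B_N*\tilde{f}_{N,i}$ and note $B_N = \varphi(N)^{-1}f_{N,s}$ up to the constant, so $B_N*\tilde{f}_{N,i} = \varphi(N)^{-1}f_{N,s}*\tilde{f}_{N,i}$; the estimates $|f_{N,s}*f_{N,i}|\lesssim N^{1/c-1/2}\log^i N$ and $|f_{N,s}*E_N|\lesssim N^{2/c-2}$ from Lemma \ref{l:4pptfdet} then give, after dividing by the two extra factors of $\varphi(N)$, a bound $\lesssim N^{1/c-1/2-2/c}\log^2 N = N^{-1/c-1/2}\log^2 N$, which is $\ll N^{-\epsilon-1}$ for an appropriate small $\epsilon=\epsilon(c)>0$ since $1/c+1/2 > 1$ for $c<2$ (and the $N^{2/c-2}\cdot\varphi(N)^{-2}=N^{-2}$ contribution from $E_N$ is even smaller). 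Absorbing all logarithmic losses into an arbitrarily small power of $N$ and taking $\epsilon$ to be the minimum of the resulting gains over the finitely many terms completes the argument.

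The only real obstacle is bookkeeping the exponents so that every one of the $O(1)$-many pieces genuinely beats $N^{-1}$ by a fixed positive power; the single binding constraint, as indicated in the text, comes from $f_{N,1}*\tilde{f}_{N,1}$ and forces $c<8/7$, while every other term has comfortable room. One should also double-check that the constant $1/c$ bookkeeping between $A_N$, $\widehat{f_{N,s}}$, and the definition \eqref{e:detANBN} of $B_N$ is consistent, so that the leading term is exactly $B_N*\tilde{B_N}$ with no spurious constant — but this is routine given how $\varphi(N)$ and $\phi_\alpha$ were set up. No genuinely new estimate is needed beyond what is already proved.
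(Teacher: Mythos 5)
Your proposal is correct and reproduces the argument the paper intends: expand $A_N = B_N + \varphi(N)^{-1}(f_{N,1}+f_{N,2}+E_N)$ via \eqref{e:FourierDecompose4cases}, convolve, and plug in the pointwise bounds of Lemma \ref{l:4pptfdet} term by term, with the $f_{N,1}*\tilde{f}_{N,1}$ piece giving the binding constraint $4/3 - 8/(3c) < -1 \iff c < 8/7$ after using $|x|\gtrsim N^{1/c}$ and dividing by $\varphi(N)^2$. You also correctly flag (and correctly resolve) the cosmetic $1/c$ mismatch between the normalization in \eqref{e:detANBN} and the line $\widehat{f_{N,s}} = \tfrac{N^{1/c}}{c}\widehat{B_N}$ in \eqref{e:FourierDecompose4cases}, which is a typo in the source rather than a gap in the argument.
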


By Lemma \ref{e:Bsmooth}, it remains only to address the upper bound
\begin{align}\label{e:upperbound}
A_N*\tilde{A_N}(x) = O(1/N), \; \; \; 0 < |x| \lesssim N^{1/c}
\end{align}
when $0 < |x| \lesssim N^{1/c}$.
This will be the content of our final calculation in this subsection, from which derives our global restriction $1 < c < 7/6$: we need to balance
\[ N^{1-1/3c + \delta} \lesssim N^{2/c-1} \Rightarrow 1 < c< 7/6 \text{ provided that } 1 \gg \delta = \delta(c) >0,\]
where the term on the left derives from our computation of $\Sigma_2$.

\begin{proof}[Proof of \eqref{e:upperbound}]
Without loss of generality, we consider the case of positive $x \geq 1$.

Suppose that $(N/2)^{1/c}< m<n \leq N^{1/c}$ are such that
\[ \lfloor n^c \rfloor - \lfloor m^c \rfloor = x;\]
then we must have
\[ x-1 \leq n^c - m^c \leq x + 1.\]
For notational ease, denote
\[ k := k(n,m) := n -m, \]
so that $ |k| \approx \frac{|x|}{N^{1-1/c}}$ by the Mean-Value Theorem.

Consider the increasing function 
\[ g_k(n) := n^c - (n-k)^c,\]
and note that by the Mean-Value Theorem
\[ |g_k(n+1) - g_k(n)| \gtrsim \frac{|k|}{N^{2/c-1}}.\]
So
\begin{align}
    \varphi(N)^2 A_N*\tilde{A_N}(x) &\leq |\{ (n,k) : |k| \approx \frac{|x|}{N^{1-1/c}}, \ |n^c - (n-k)^c -x| \leq 1 \}| \\
    & \qquad \lesssim \sum_{|k| \approx \frac{|x|}{N^{1-1/c}}} \frac{N^{2/c-1}}{k} + 1 \lesssim N^{2/c-1},
\end{align}
from which the result follows.
\end{proof}

This concludes our study of our deterministic sequences
\[ a_n = \lfloor n^c \rfloor, 1 < c < 7/6.\]
In the next subsection, we treat our random examples, which are significantly less involved.

\subsection{Random Examples}
Let $X_n$ denote a sequence of independent Bernoulli Random Variables with expectations $\mathbb{E} X_n = n^{-\alpha},\ 0 < \alpha < 1/2$, and define the hitting times
\[ a_k := \min\{ n : X_1 + \dots + X_n = k \},\]
so that (almost surely) $a_k \approx k^{\frac{1}{1-\alpha}}$ by Chernoff's inequality, Lemma \ref{l:Chern} below, and a Borel-Cantelli argument, see \cite[\S 5]{BOOK}. 

\begin{lemma}\label{l:Chern}
Let $\{ Y_n \}$ denote independent mean-zero $1$-bounded random variables. Then
\[ \mathbb{P}( |\sum_{n \leq N} Y_n| \geq \lambda) \leq 10\max\{ e^{- \frac{\lambda^2}{10 V_N}}, e^{-\frac{\lambda}{10}} \}\]
where $V_N := \sum_{n \leq N} \mathbb{E} |Y_n|^2$ is the total variance.
\end{lemma}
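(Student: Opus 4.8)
The final statement to prove is Lemma~\ref{l:Chern}, the Chernoff/Bernstein inequality for sums of independent mean-zero, $1$-bounded random variables. Let me sketch the proof.

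=== PROOF PROPOSAL ===

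The plan is to run the classical exponential-moment (Bernstein) argument. Write $S_N := \sum_{n \leq N} Y_n$ and fix a parameter $\lambda > 0$; by symmetry it suffices to bound $\mathbb{P}(S_N \geq \lambda)$ and then double. For any $t > 0$, Markov's inequality applied to $e^{t S_N}$ gives $\mathbb{P}(S_N \geq \lambda) \leq e^{-t\lambda}\,\mathbb{E} e^{t S_N} = e^{-t\lambda}\prod_{n \leq N}\mathbb{E} e^{t Y_n}$, using independence. So everything reduces to a pointwise bound on each factor $\mathbb{E} e^{t Y_n}$.

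The key elementary estimate is that for a mean-zero random variable $Y$ with $|Y| \leq 1$ and any $0 < t \leq 1$, one has $\mathbb{E} e^{tY} \leq \exp(t^2 \mathbb{E}|Y|^2)$ (or with some harmless absolute constant in the exponent). This follows by expanding $e^{tY} = 1 + tY + \sum_{k\geq 2} (tY)^k/k!$, taking expectations so the linear term vanishes by mean-zero, bounding $|Y|^k \leq |Y|^2$ for $k \geq 2$ since $|Y|\leq 1$, summing the resulting geometric-type series in $t$ to get $\mathbb{E} e^{tY} \leq 1 + C t^2\mathbb{E}|Y|^2 \leq \exp(C t^2 \mathbb{E}|Y|^2)$ via $1+u \leq e^u$. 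Multiplying over $n$ yields $\mathbb{E} e^{t S_N} \leq \exp(C t^2 V_N)$, hence $\mathbb{P}(S_N \geq \lambda) \leq \exp(C t^2 V_N - t\lambda)$ for all $0 < t \leq 1$.

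Finally one optimizes in $t$. The unconstrained optimum is $t = \lambda/(2CV_N)$: if this lies in $(0,1]$, i.e. $\lambda \lesssim V_N$, substituting gives the Gaussian bound $\exp(-c\lambda^2/V_N)$; if instead $\lambda \gtrsim V_N$ so the optimum exceeds $1$, we take $t = 1$ and get $\exp(C V_N - \lambda) \leq \exp(-c\lambda)$ since $V_N \lesssim \lambda$. Combining the two regimes and folding the constants into the $10$ and the $1/10$'s in the statement (which are deliberately lossy) gives $\mathbb{P}(|S_N| \geq \lambda) \leq 10\max\{e^{-\lambda^2/(10V_N)}, e^{-\lambda/10}\}$, as claimed. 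There is no real obstacle here — the only point requiring a little care is tracking the absolute constants through the exponential-moment bound and the two-regime optimization so that they are genuinely dominated by the stated $\tfrac{1}{10}$ factors; this is routine and is exactly why the statement is phrased with such generous constants. (Alternatively, one could simply cite the standard reference \cite[\S 5]{BOOK}, where this appears verbatim.)
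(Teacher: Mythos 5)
The paper does not actually supply a proof of Lemma~\ref{l:Chern}; it is stated as a standard fact (the implicit reference being \cite[\S 5]{BOOK}) and then used in the Borel--Cantelli argument for the random sequences. Your proof is the textbook exponential-moment (Chernoff/Bernstein) argument, and it is correct: the moment generating function bound $\mathbb{E}\, e^{tY_n} \leq 1 + t^2\,\mathbb{E}|Y_n|^2 \leq \exp(t^2\,\mathbb{E}|Y_n|^2)$ for $0 < t \leq 1$ (using mean-zero and $|Y_n|^k \leq |Y_n|^2$ for $k \geq 2$, together with $\sum_{k\geq 2} t^k/k! \leq t^2$ when $t \leq 1$), combined with independence and Markov, gives $\mathbb{P}(S_N \geq \lambda) \leq \exp(t^2 V_N - t\lambda)$ for $0 < t \leq 1$. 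Taking $t = \min\{1, \lambda/(2V_N)\}$ yields $\exp(-\lambda^2/(4V_N))$ in the Gaussian regime $\lambda \leq 2V_N$, and $\exp(V_N - \lambda) \leq \exp(-\lambda/2)$ in the sub-exponential regime $\lambda > 2V_N$; doubling for $|S_N|$ gives the claim with constants that are in fact much better than the generous $10$'s in the statement. One small point: the lemma is stated with $\mathbb{E}|Y_n|^2$ suggesting complex scalars may be allowed, in which case you should split into real and imaginary parts and absorb the resulting $\sqrt{2}$-losses into the constants; in the paper's actual application the variables $X_n - \mathbb{E}X_n$ are real, so this is cosmetic.
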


For all $N$ sufficiently large, set 
\[ A_N^0 := \frac{1}{\sum_{N/2 < n \leq N} X_n} \sum_{N/2 < n \leq N} X_n \delta_n, \]
which is a reparametrization of
\[ \frac{1}{N} \sum_{N/2 < n \leq N} \delta_{a_n},\]
and with
\[ W_N := \sum_{N/2 < n \leq N} \mathbb{E} X_n \approx_\alpha N^{1-\alpha}, \]
so $W_N \approx \varphi(N)$ in the previous notation,
define the random averaging operator
\[ A_N := \frac{1}{W_N} \sum_{N/2 < n \leq N} X_n \delta_{n},\]
its deterministic counterpart
\[ 
B_N := \mathbb{E} A_N := \frac{1}{W_N} \sum_{N/2 <  n \leq N} n^{-\alpha} \delta_{n} \approx_\alpha \frac{1}{N} \sum_{N/2 < n \leq N} \phi_\alpha(n/N) \delta_n,\]
and consolidate the error term
\[ \mathcal{E}_N := A_N - B_N.\]
First, note that, almost surely
\begin{align}
    \| A_N - A_N^0 \|_{\ell^1(\mathbb{Z})} \lesssim \frac{|\sum_{n \leq N} (X_n - \mathbb{E} X_n)|}{ W_N} \lesssim    \sqrt{\log N} \cdot N^{- (1-\alpha)/2}
\end{align}
by Chernoff's inequality, since almost surely
\begin{align}\label{e:conc} \sum_{N/2 < n \leq N} X_n = \frac{1}{1-\alpha} N^{1-\alpha} + O(\sqrt{\log N} \cdot N^{(1- \alpha)/2}), \end{align}
so
\begin{align}
    \| \mathbf{N}((A_N - A_N^0)*f) \|_{\ell^{1}(\mathbb{Z})} \lesssim \sum_N \|(A_N - A_N^0)*f\|_{\ell^1(\mathbb{Z})} \lesssim \| f \|_{\ell^1(\mathbb{Z})}; 
\end{align} 
thus, we can focus our attention on $A_N$. First, note that
\[ A_N*\tilde{A_N}(0) \approx_\alpha N^{\alpha - 1}, \]
by \eqref{e:conc}. Next, by \cite[\S 5]{BOOK}, with probability $1$, we have
\[
\|\widehat{\mathcal{E}}_{N}\|_{L^{\infty}(\mathbb{T})} \lesssim \sqrt{\log N} \cdot N^{(\alpha -1)/2},
\]
and
\[ \| \mathcal{E}_N * \tilde{\mathcal{E}_N} \|_{\ell^{\infty}(\mathbb{Z} \smallsetminus \{0\})} \lesssim N^{-1 -\epsilon(\alpha)} \]
 for some $\epsilon(\alpha) > 0$. Moreover,
\[ B_N*\tilde{\mathcal{E}_N}(x) = \frac{1}{W_N^2} \sum_{N/2 < n,x+n \leq N} (X_n - \mathbb{E} X_n) \cdot (x+n)^{-\alpha} \]
and so by Chernoff's inequality, almost surely
\[ \| B_N*\tilde{\mathcal{E}_N} \|_{\ell^{\infty}(\mathbb{Z})} + \| \mathcal{E}_N*\tilde{B_N} \|_{\ell^{\infty}(\mathbb{Z})} \lesssim \log N  \cdot  N^{\alpha - 2} \ll N^{-3/2}.\]

Finally, the contribution of $B_N*\tilde{B_N}$ has been handled by Lemma \ref{e:Bsmooth}.

\section{Pointwise Convergence}
In this section, we apply our main results to prove quantitative convergence estimates for our ergodic averages,
\[ \mathbb{E}_{[N]} T^{a_n} f \]
for $f \in L^1(X)$.

We first emphasize that from the purposes of convergence, it suffices to establish convergence along lacunary sequences of the form
\[ \{ N = \lfloor 2^{k/R} \rfloor : k \geq 1 \}_{R \in 2^{\mathbb{N}}}, \]
so we will restrict all times below to such a sequence; we will regard this sequence as fixed throughout the below discussion.

We begin by exploring the utility of our operators \eqref{e:osc} in questions involving pointwise convergence: observe that a norm estimate of the form e.g.\
\[  \sup_{\epsilon} \| \epsilon N_{\epsilon}(f_n)^{1/r} \|_{L^{p,\infty}(X,\mu)} \leq C\]
implies that $N_{\epsilon}(f_n) <  \infty$ $\mu$-almost everywhere for each $\epsilon > 0$, and thus $\{ f_n\}$ converge almost everywhere as well; via the majorization
\[ \epsilon N_{\epsilon}(f_n)^{1/r} \leq \mathcal{V}^r(f_n),\]
we see similar utility in working with $r$-variation. Or, more subtly, a norm estimate of the form
\[ \sup_{\{ M_j \}} \| \mathcal{O}_{\{ M_j \}_{j \leq J}}(f_n) \|_{L^2(X)} = o_{J \to \infty}(J^{1/2}) \]
implies that for each $\epsilon > 0$
\[ \mu (\{ \limsup_{n,m} |f_n - f_m| \geq \epsilon \}) = 0, \]
as otherwise one could extract a (finite) increasing sequence of times, $\{ M_j \}_{j \leq J}$, of arbitrary length, so that
\[ \mu(\{ \max_{M_j \leq  n \leq M_{j+1}} |f_n - f_{M_j}| \geq \epsilon/10 \}) > \epsilon_0
\]
for some $\epsilon_0 > 0$, leading to the following chain of inequalities
\[ J \epsilon_0 \leq \sum_{j \leq J} \mu(\{ \max_{M_j  \leq n \leq M_{j+1}} |f_n - f_{M_j}| \geq \epsilon/10 \}) \lesssim \epsilon^{-2} \| \mathcal{O}_{\{M_j\}_{j \leq J}}(f_n)\|_{L^2(X)}^2;
\]
but this is precluded by the slow growth rate of $\mathcal{O}$. This approach was introduced, and crucially used, by Bourgain in \cite{B0, Bp, B2}. 

More generally, we have the following lemma, which we use to deduce Theorem \ref{t:cor} from Proposition \ref{p:weak}.
\begin{lemma}\label{l:trans}
Suppose that $f_N(x) := \mathbb{E}_{[N]} f(x-a_n)$, and that one of the following estimates hold (for any $r < \infty$)
\begin{itemize}
    \item $\sup_{\epsilon > 0} \| \epsilon N_\epsilon(f_N)^{1/r} \|_{\ell^{p,\infty}(\mathbb{Z})} \lesssim \|f \|_{\ell^p(\mathbb{Z})};$
    \item $\| \mathcal{V}^r(f_N) \|_{\ell^{p,\infty}(\mathbb{Z})} \lesssim \| f\|_{\ell^p(\mathbb{Z})};$ or
    \item $\sup_{\{M_j\}} \| \mathcal{O}_{\{M_j\}_{j \leq J}}(f_N) \|_{\ell^{p,\infty}(\mathbb{Z})} = o_{J \to \infty}(J^{1/2}) \| f \|_{\ell^p(\mathbb{Z})}.$ 
\end{itemize}
Then for each $f \in L^p(X)$, $\{ \mathbb{E}_{[N]} T^{a_n} f \}$ converge almost everywhere.
\end{lemma}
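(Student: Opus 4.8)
\textbf{Proof proposal for Lemma \ref{l:trans}.}

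The plan is to deduce pointwise convergence of the ergodic averages $\mathbb{E}_{[N]} T^{a_n} f$ on a general measure-preserving system from the corresponding $\ell^{p,\infty}(\mathbb{Z})$ estimates on the integer shift, via Calder\'{o}n's transference principle, together with the elementary observations recorded just before the lemma linking each of the three oscillation quantities to a.e.\ convergence. First I would reduce to a fixed lacunary sequence of scales of the form $\{\lfloor 2^{k/R}\rfloor : k \geq 1\}$, $R \in 2^{\mathbb{N}}$, as indicated in the text: convergence of $\{\mathbb{E}_{[N]} T^{a_n} f\}_{N}$ along the full sequence follows once we have it along every such lacunary subsequence (the intervening averages differ negligibly by convexity/telescoping as $R \to \infty$). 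So it suffices to work with $N \in \mathbb{D}$ a $\lambda$-lacunary sequence, which is exactly the setting of Proposition \ref{p:weak} and the hypotheses at hand.

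Next I would transfer: given a measure-preserving system $(X,\mu,T)$ and $f \in L^p(X)$, for a fixed large integer $L$ one considers, for $\mu$-a.e.\ $x$, the sequence $n \mapsto f(T^n x)$ restricted to $|n| \leq L$ as an element of $\ell^p(\mathbb{Z})$ supported in $[-L,L]$; applying the assumed $\ell^{p,\infty}$ bound for $\mathbf{N} \in \{\epsilon N_\epsilon(\cdot)^{1/r}, \mathcal{V}^r, \mathcal{O}_{\{M_j\}}\}$ to this truncated sequence, integrating over $X$, using measure-preservation to undo the truncation, and letting $L \to \infty$, one obtains
\[
\| \mathbf{N}(\mathbb{E}_{[N]} T^{a_n} f : N \in \mathbb{D}) \|_{L^{p,\infty}(X)} \lesssim \| f \|_{L^p(X)},
\]
and in the oscillation case the $o_{J\to\infty}(J^{1/2})$ gain is likewise preserved since the implied constant is uniform in the system. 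This is the standard transference argument; I would either cite \cite{C1} or spell out the truncation bookkeeping in a sentence or two. It applies verbatim since all three quantities are built from sublinear, translation-commuting operations on the sequence $\{f_N\}$.

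Finally I would convert the transferred norm bounds into a.e.\ convergence exactly as in the discussion preceding the lemma: from $\sup_\epsilon \| \epsilon N_\epsilon(\mathbb{E}_{[N]}T^{a_n}f)^{1/r} \|_{L^{p,\infty}(X)} < \infty$ we get $N_\epsilon(\mathbb{E}_{[N]}T^{a_n}f)(x) < \infty$ for $\mu$-a.e.\ $x$ and every $\epsilon > 0$, hence $\{\mathbb{E}_{[N]}T^{a_n}f(x)\}$ is Cauchy a.e.; the $\mathcal{V}^r$ case reduces to this via $\epsilon N_\epsilon(a_k)^{1/r} \leq \mathcal{V}^r(a_k)$; and the oscillation case uses the Chebyshev/extraction argument displayed in the text, whereby a positive-measure set of non-convergence at level $\epsilon$ would force $\| \mathcal{O}_{\{M_j\}_{j\leq J}} \|_{L^2}^2 \gtrsim \epsilon^2 \epsilon_0 J$, contradicting the sublinear growth (one first passes from $L^{2}$ to $L^{p,\infty}$, or simply works directly with the weak-type bound and Chebyshev). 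The only mild subtlety — and the step I would be most careful about — is the interplay between the $o_{J\to\infty}$ gain and transference in the oscillation case, namely checking that the little-$o$ constant transfers with no loss; this is fine because the truncation argument produces the inequality with the \emph{same} function of $J$ on the right, so the gain survives. Combining the three cases across the fixed family of lacunary scales, and then across the parameter $R$, yields a.e.\ convergence of $\mathbb{E}_{[N]}T^{a_n}f$ for every $f \in L^p(X)$, completing the proof.
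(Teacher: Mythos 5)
Your proposal is correct but takes a genuinely different route from the paper's. The paper treats the first two alternatives by direct transference, but for the oscillation case it deliberately avoids transferring the oscillation bound itself: it first transfers the maximal function (via monotone convergence) to reduce to bounded $f$, then introduces the finitary quantity $C_{\tau,H}$ on $\mathbb{Z}$, shows $C_{\tau,H} \lesssim_\tau 1$ by a Chebyshev computation, and finally invokes \cite{BKNew} together with the later Lemmas (\ref{l:QC} and the second transference lemma) to pass from this finitary criterion to a.e.\ convergence. You instead transfer the $\ell^{p,\infty}$ oscillation bound to $L^{p,\infty}(X)$ directly for each fixed finite $\{M_j\}_{j\le J}$ and then run the extraction-plus-Chebyshev argument on $X$. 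Both routes work; the paper's buys an explicitly quantitative statement (the uniform bound on $C_{\tau,H}$), in keeping with its theme, while yours is the shorter classical argument if one only wants a.e.\ convergence.

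Two places in your write-up are thinner than they should be. First, the reduction to bounded $f$ should come \emph{before} the passage to lacunary scales: the claim that ``intervening averages differ negligibly by convexity/telescoping as $R \to \infty$'' is an $O(1/R)$ bound only when $|f| \lesssim 1$; for general $f\in L^p$ you first need the transferred maximal bound $\|\sup_N |\mathbb{E}_{[N]} T^{a_n} f|\|_{L^{p,\infty}(X)} \lesssim \|f\|_{L^p(X)}$ to justify truncating $f$, exactly as the paper does. Second, the extraction argument preceding the lemma is stated with $\|\mathcal{O}\|_{L^2(X)}^2$ on the right, but the hypothesis only gives a weak-type $L^{p,\infty}$ bound (with $p=1$ in the main application). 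Converting the inequalities $\mu(B_j) \geq \epsilon_0$, $j \leq J$, into a lower bound on $\mu(\{\mathcal{O}_{\{M_j\}_{j\leq J}} \gtrsim \epsilon\sqrt{J\epsilon_0}\})$ requires a layer-cake/pigeonhole step (using $\sum_j \mathbf{1}_{B_j} \leq J$ and the $B_j$ lying in a set of finite measure), after which the weak-type bound and the $o_{J\to\infty}(J^{1/2})$ gain give the contradiction. You gesture at this (``one first passes\ldots or simply works directly with the weak-type bound and Chebyshev'') but the paper's version sidesteps it cleanly by working with bounded $f$ on $[H]$, where the $\max$-terms are uniformly bounded and the split into $\{(\tfrac{1}{J}\sum_j \max |\cdot|^2)^{1/2} < \tau^{10}\}$ and its complement is immediate. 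These are fixable points, not a wrong approach, but they are exactly the spots where a referee would push back.
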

\begin{proof}
The first two alternatives follow from straightforward applications of Calder\'{o}n's Transference Principle \cite{C1}; the interesting case involves the oscillation operator.

First, by monotone convergence, we have a weak-type $(p,p)$ bound on 
    \[ \| \sup_N |f_N| \|_{\ell^{p,\infty}(\mathbb{Z})} \lesssim \|f \|_{\ell^p(\mathbb{Z})},\] and so by Calder\'{o}n's Transference Principle \cite{C1}, on $\sup_N |\mathbb{E}_{[N]} T^{a_n} f|$ as well, 
    \[ \| \sup_N |\mathbb{E}_{[N]} T^{a_n} f | \|_{L^{p,\infty}(X)} \lesssim \|f\|_{L^p(X)};\]
    it suffices to prove pointwise convergence for bounded functions. Assume for concreteness that $h$ is an increasing function such that $a_n \leq h(n)$ for all $n$.
        By \cite[Proposition 4.3]{BKNew} and \cite[Proposition 5.5]{BKNew}, it suffices to prove that
    \begin{align}
        C_{\tau,H} := \sup\{ K : &\text{ there exists $1$-bounded } f, M_0 < M_1 < \dots < M_K \leq h^{-1}(H/100), \\
        &\text{ so that } |\{ x \in [H] : \max_{M_{k-1} \leq N \leq M_k} |f_N - f_{M_k}| \geq \tau \}| \geq \tau H \} \lesssim_{\tau} 1
    \end{align} 
    independent of $H$, where we specialize all times to live in $\mathbb{D}_{\lambda} = \{ \lfloor 2^{k/R} \rfloor \} $ with $R \gg \tau^{-1}$.

In the interest of keeping the paper self-contained, we provide the details below, and continue with the proof:

So, suppose that $J := C_{\tau,H} \lesssim_H 1$ realizes the above supremum, for an appropriate $f, \{M_k\}$; since $M_J \leq h(H/100)$, we can assume that $|f| \leq \mathbf{1}_{[-H,2H]}$.
Then
\begin{align}
    \tau H &\leq \frac{1}{J} \sum_{j \leq J} \sum_{x \in [H]} \mathbf{1}_{\{ \max_{M_{j-1} \leq N \leq M_j} |f_N - f_{M_j}| \geq \tau \}}(x) \\
    &  \leq \tau^{-2} \sum_{x \in [H]} \frac{1}{J} \sum_{j \le J} \max_{M_{j-1} \leq N \leq M_j} |f_N - f_{M_j}|^2(x) \\
    &\leq \tau^{-2} \tau^{100} H  + \tau^{-2} |\{\big(\frac{1}{J} \sum_{j \leq J} \max_{M_{j-1} \leq N \leq M_j} |f_N  - f_{M_j}|^2\big)^{1/2} \geq \tau^{10}\}| \\
    & \leq \tau^{10} H + o_{J \to \infty}(\tau^{-2 -10p} \| f \|_{\ell^p(\mathbb{Z})}^p) = \tau^{10} H + o_{J \to \infty}(\tau^{-2 -10p} H)
\end{align} 
    which forces an upper bound on $C_{\tau,H}$, independent of $H$.
\end{proof}

We now complete the reductions that allow us to derive convergence from the boundedness of $C_{\tau,H} \lesssim_\tau 1.$
\begin{proposition}\label{p:pointwise}
    Suppose that for each $\tau,H$, $C_{\tau,H} \lesssim_{\tau} 1$
    independent of $H$. Then for any $(X,\mu,T)$, and any $f \in L^{\infty}(X)$,
    \[ \mathbb{E}_{[N]} T^{a_n} f \]
converges almost everywhere.
    \end{proposition}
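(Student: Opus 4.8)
The plan is to reduce Proposition \ref{p:pointwise} to Lemma \ref{l:trans} by a standard two-step argument: first establish convergence for a dense subclass of $L^\infty(X)$ using a transference of the oscillation bound packaged in the hypothesis $C_{\tau,H}\lesssim_\tau 1$, and then upgrade to all of $L^\infty(X)$ (indeed all of $L^1(X)$) by a maximal-inequality/Banach-principle argument. More precisely, I would first observe that the quantity $C_{\tau,H}$ is exactly the (transferred, finitary) obstruction to the oscillation estimate $\sup_{\{M_j\}}\|\mathcal{O}_{\{M_j\}_{j\le J}}(f_N)\|_{\ell^{2,\infty}(\mathbb{Z})}=o_{J\to\infty}(J^{1/2})\|f\|_{\ell^2(\mathbb{Z})}$ on the model system $(\mathbb{Z},|\cdot|,x\mapsto x-1)$, for $1$-bounded $f$ supported in a window commensurate with $H$. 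So the first step is to convert the uniform bound $C_{\tau,H}\lesssim_\tau 1$ into the asymptotic statement that, on the integers, $\mathcal{O}_{\{M_j\}_{j\le J}}(f_N)$ grows like $o(J^{1/2})$ uniformly over choices of $\{M_j\}$ and over $1$-bounded $f$; this is precisely the content used inside the proof of Lemma \ref{l:trans}, run in reverse, together with \cite[Proposition 4.3]{BKNew}, \cite[Proposition 5.5]{BKNew}.

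Next I would transfer this oscillation control to an arbitrary measure-preserving system $(X,\mu,T)$ via Calder\'{o}n's Transference Principle \cite{C1}: the averages $\mathbb{E}_{[N]}T^{a_n}f$ on $X$ are dominated, in the relevant oscillation/maximal sense, by the corresponding convolution averages on $\mathbb{Z}$, so the bound on $C_{\tau,H}$ yields $\sup_{\{M_j\}}\|\mathcal{O}_{\{M_j\}_{j\le J}}(\mathbb{E}_{[N]}T^{a_n}f)\|_{L^{2,\infty}(X)}=o_{J\to\infty}(J^{1/2})$ for bounded $f$, restricting all times to the fixed lacunary sequence $\mathbb{D}_\lambda=\{\lfloor 2^{k/R}\rfloor\}$ chosen at the start of the section. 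By the argument recorded just before Lemma \ref{l:trans}, slow growth of the oscillation operator forces $\mu(\{\limsup_{n,m}|f_n-f_m|\ge\epsilon\})=0$ for every $\epsilon>0$, i.e.\ $\{\mathbb{E}_{[N]}T^{a_n}f\}$ converges $\mu$-a.e., for every $f\in L^\infty(X)$. Because it suffices (as already noted in the excerpt) to prove convergence along lacunary $N$, and because convergence along lacunary times together with a maximal inequality yields convergence along all $N$ by a routine splitting argument, this handles the full claim for $f\in L^\infty(X)$.

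Finally, to extend from $L^\infty$ to arbitrary $f\in L^1(X)$ — which is really the content of Theorem \ref{t:cor}, and which Proposition \ref{p:pointwise} feeds into — I would invoke the Banach principle: Proposition \ref{p:weak} (whose hypotheses are verified for our $A_N$ in the Examples section) gives the weak-type $(1,1)$ bound $\|\sup_N|\mathbb{E}_{[N]}T^{a_n}f|\|_{L^{1,\infty}(X)}\lesssim\|f\|_{L^1(X)}$ via transference, and this maximal inequality, combined with a.e.\ convergence on the dense class $L^\infty(X)\cap L^1(X)$, yields a.e.\ convergence for all $f\in L^1(X)$ by the usual $\epsilon/3$ argument. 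I expect the main obstacle to be bookkeeping rather than conceptual: carefully matching the finitary, windowed formulation of $C_{\tau,H}$ (with its $h^{-1}(H/100)$ truncation of the time scales and the restriction to $\mathbb{D}_\lambda$ with $R\gg\tau^{-1}$) to the infinitary oscillation estimate one actually wants, and ensuring the transference step respects the localization to a window of size $\approx H$ so that no loss is incurred. Once the equivalence between $C_{\tau,H}\lesssim_\tau 1$ and the $o(J^{1/2})$ oscillation growth is nailed down on $\mathbb{Z}$, transference and the Banach principle are entirely standard.
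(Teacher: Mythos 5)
Your high-level plan --- transfer an oscillation-counting bound from $\mathbb{Z}$ to $(X,\mu,T)$ and then use entropy considerations to preclude a positive-measure set of non-convergence --- is the same architecture the paper uses. The paper introduces the measure-theoretic counterpart $C_{\tau,H}^{(X,\mu,T)}$, shows (Lemma \ref{l:QC}) that $C_{\tau,H}^{(X,\mu,T)}\lesssim_\tau 1$ implies a.e.\ convergence for every $1$-bounded $f$ by a contradiction/extraction argument, and then proves a transference lemma giving $C_{\tau,H}^{(X,\mu,T)}\lesssim\tau^{-1}C_{c_0\tau,H}$ by running the ergodic averages along the orbit segment $\{T^w x : w\in[H]\}$ and comparing against the sequence-space count. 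Combining these with the hypothesis finishes the proof.

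The genuine gap is in your Step 1. The hypothesis $C_{\tau,H}\lesssim_\tau 1$ is strictly weaker than the norm bound $\sup_{\{M_j\}}\|\mathcal{O}_{\{M_j\}_{j\le J}}(f_N)\|_{\ell^{p,\infty}(\mathbb{Z})}=o_{J\to\infty}(J^{1/2})\|f\|_{\ell^p(\mathbb{Z})}$: it only counts oscillation bands of height $\geq\tau$ occupying a $\geq\tau$ fraction of the window $[H]$, for $1$-bounded windowed functions, and carries none of the $L^p$ homogeneity or scale invariance of the genuine norm estimate. Lemma \ref{l:trans} deduces the $C$-bound \emph{from} the norm estimate; it cannot be "run in reverse," so neither the $\ell^{2,\infty}(\mathbb{Z})$ bound nor its transferred $L^{2,\infty}(X)$ version in your Step 2 is recoverable from the hypothesis of the proposition. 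The paper avoids ever returning to a norm estimate after the single application of Proposition \ref{p:weak}: everything downstream is conducted at the level of the finitary quantities $C_{\tau,H}$ and $C_{\tau,H}^{(X,\mu,T)}$, which are exactly the currency that the Calder\'on orbit-averaging argument moves. To repair your plan, Steps 2--3 should be reformulated to produce and then exploit $C_{\tau,H}^{(X,\mu,T)}\lesssim_\tau 1$ directly, at which point you would land on the paper's argument. Two smaller remarks: the "dense subclass of $L^\infty$ then Banach principle" framing is not how the $L^\infty$ case is handled (the finitary argument yields convergence for $1$-bounded functions outright, with no approximation step inside $L^\infty$); and your final paragraph on extending to $L^1$ is correct but belongs to the derivation of Theorem \ref{t:cor}, not to Proposition \ref{p:pointwise}, which claims only $L^\infty$ convergence.
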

The proof of Proposition \ref{p:pointwise} goes through the Calder\'{o}n Transference principle, with the principle quantity of interest being a measure-theoretic variant of $C_{\tau,H}:$
\begin{align}
C_{\tau,H}^{(X,\mu,T)} &:= \sup \Big\{ K : \text{ there exists a $1$-bounded } f , \\
& \qquad \text{ and a sequence } M_0 < M_1 < \dots < M_K \leq h^{-1}(H/100), \\
& \qquad \qquad \text{ so that } \mu(\{ \max_{M_{k-1} \leq M \leq M_k} |\mathbb{E}_{[M]} T^{a_n} f - \mathbb{E}_{[M_k]} T^{a_n} f| \geq \tau \}) \geq \tau \Big\}
\end{align} 
where we restrict all times to live in the set $\{ \lfloor 2^{k/R} \rfloor \} $ where $R \gg \tau^{-1}$;
convergence in $L^{\infty}(X,\mu,T)$ follows from a bound $C_{\tau,H}^{(X,\mu,T)} \lesssim_{\tau} 1$: 
\begin{lemma}[Quantifying Convergence]\label{l:QC}
    To prove that $\mathbb{E}_{[M]}T^{a_n}f$ converge almost everywhere for each $f \in L^{\infty}(X,\mu,T)$, it suffices to prove that for each $\tau,H$, 
    \[ C_{\tau,H}^{(X,\mu,T)} \lesssim_{\tau} 1, \]
uniformly in $H$.
\end{lemma}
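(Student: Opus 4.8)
The plan is to show that a bound $C_{\tau,H}^{(X,\mu,T)} \lesssim_\tau 1$ uniform in $H$ forces $\mathbb{E}_{[M]} T^{a_n} f$ to be Cauchy $\mu$-a.e., by contradiction. Suppose convergence fails; then there is some $\tau_0 > 0$ and a subset of positive measure on which $\limsup_{M,M'} |\mathbb{E}_{[M]} T^{a_n} f - \mathbb{E}_{[M']} T^{a_n} f| \geq \tau_0$. Fixing $\tau = \tau_0/C$ for a suitable absolute constant $C$ and restricting all scales to the lacunary grid $\{\lfloor 2^{k/R}\rfloor\}$ with $R \gg \tau^{-1}$, I would extract from this set of divergence, one block at a time, a long increasing sequence of times $M_0 < M_1 < \dots < M_K$ such that for each $k$,
\[ \mu\bigl(\{ \max_{M_{k-1} \leq M \leq M_k} |\mathbb{E}_{[M]} T^{a_n} f - \mathbb{E}_{[M_k]} T^{a_n} f| \geq \tau \}\bigr) \geq \tau. \]
The point is that $K$ can be taken arbitrarily large: once $M_{k-1}$ has been chosen, the divergence hypothesis on a positive-measure set guarantees (after replacing $f$ by a bounded truncation and using that the relevant oscillation at scales $\geq M_{k-1}$ is still at least $\tau_0$ on that set) a further time $M_k$ witnessing an oscillation of size $\geq \tau$ on a set of measure $\geq \tau$. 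This contradicts $C_{\tau,H}^{(X,\mu,T)} \lesssim_\tau 1$ as soon as $K$ exceeds that bound, once $H$ is chosen large enough that $h^{-1}(H/100) \geq M_K$ (and $f$ truncated to be supported where needed); note $C_{\tau,H}^{(X,\mu,T)}$ is monotone nondecreasing in $H$, so taking $H \to \infty$ is harmless.

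More carefully, I would first reduce to bounded $f$: a maximal inequality $\|\sup_M |\mathbb{E}_{[M]} T^{a_n} f|\|_{L^{1,\infty}(X)} \lesssim \|f\|_{L^1(X)}$ (which follows from Proposition \ref{p:weak} applied with the sup, $\mathbf{N} = \sup$, and Calder\'on transference) shows the set of $f \in L^\infty$ for which convergence holds a.e.\ is closed in $L^1$, and bounded functions are dense, so it suffices to treat $f \in L^\infty(X)$ — which is exactly the hypothesis-side statement. Then the extraction argument above is purely measure-theoretic and does not use the dynamics beyond the definition of $C_{\tau,H}^{(X,\mu,T)}$; the role of restricting to the grid $\{\lfloor 2^{k/R}\rfloor\}$ with $R \gg \tau^{-1}$ is that convergence along this grid (for every $R$) implies full convergence, since consecutive grid points are within a factor $2^{1/R}$ and the maximal function controls the oscillation between them once $R$ is large relative to $\tau$.

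The main obstacle, and the step requiring care rather than routine bookkeeping, is the inductive extraction: one must verify that after choosing $M_0 < \dots < M_{k-1}$, the positive-measure set of divergence still sees oscillation of size $\gtrsim \tau_0$ at scales beyond $M_{k-1}$, so that a genuinely new time $M_k$ can be produced at each stage with the quantitative lower bounds on both the oscillation altitude and the measure. This is where one uses that $\limsup$, not $\sup$, appears in the divergence hypothesis — the tail of scales already carries the full oscillation — together with a pigeonhole to pass from "oscillation $\geq \tau_0$ somewhere in the tail on a positive-measure set" to "oscillation $\geq \tau$ in a single dyadic block $[M_{k-1}, M_k]$ on a set of measure $\geq \tau$." Once this is in place, feeding $K = C_{\tau,H}^{(X,\mu,T)} + 1$ into the supremum defining $C_{\tau,H}^{(X,\mu,T)}$ yields the contradiction.
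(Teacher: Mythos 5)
Your proposal is correct and follows essentially the same contradiction-plus-extraction argument as the paper: assume a.e.\ divergence on a positive-measure set, pass to the lacunary grid $\{\lfloor 2^{k/R}\rfloor\}$ with $R \gg \tau^{-1}$, inductively extract an arbitrarily long chain of scales witnessing oscillation $\gtrsim\tau$ on measure $\gtrsim\tau$, and contradict the uniform bound on $C_{\tau,H}^{(X,\mu,T)}$. The only small digression is your opening reduction to bounded $f$ via a maximal inequality, which is unneeded here since the lemma already posits $f\in L^\infty$ (for $1$-bounded $f$ the adjacent-grid-point comparison is the trivial $O(R^{-1})$ estimate, no maximal function required), but you acknowledge this and it does not affect the core argument.
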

\begin{proof}
Set
    \[ C_{\tau}^{(X,\mu,T)} := C_{\tau,\infty}^{(X,\mu,T)}; \]
by a monotone convergence argument, we will concern ourselves with the infinitary quantity. 

The proof is by contradiction: suppose that for some measure-preserving system, $(X,\mu,T)$, and some function $f: X \to \{|z| \leq 1\},$
\[ \mu(\{ \limsup_{M,N} |\mathbb{E}_{[M]} T^{a_n} f - \mathbb{E}_{[N]} T^{a_n} f| \gg \tau\}) \gg \tau \]
for some $1 \gg \tau > 0$. In this case, we could extract a finite subsequence of arbitrary length $K$ so that for each $1 \leq k \leq K$
\[ \mu(\{ \max_{M_{k-1} \leq M \leq M_k} |\mathbb{E}_{[M]} T^{a_n} f - \mathbb{E}_{[M_k]} T^{a_n} f| \gg \tau\}) \gg \tau;\]
there is no loss of generality here in assuming that 
\[ M_k,M \in \{ 2^{n/R} \}, \; \; \; R \gg \tau^{-1}\]
as whenever
\[ M = 2^{n/R} \leq M' < 2^{(n+1)/R} \]
are close,
\[ \mathbb{E}_{[M]} T^{a_n} f = \mathbb{E}_{[M']} T^{a_n} f + O(R^{-1}) = \mathbb{E}_{[M']} T^{a_n} f + o_{R \to \infty}(\tau); \]
we will implicitly restrict all times to this lacunary sequence. But, summing over $k \leq K$, we bound
\[ K \tau \ll \sum_{k \leq K} \mu(\{\max_{M_{k-1} \leq M \leq M_k} |\mathbb{E}_{[M]} T^{a_n} f - \mathbb{E}_{[M_k]} T^{a_n} f| \gg \tau\}) \leq C_{\tau}^{(X,\mu,T)},\]
which forces an absolute upper bound on $K \lesssim_\tau 1$, for the desired contradiction.
\end{proof}

So, Proposition \ref{p:pointwise} will be proven once we establish the following.

\begin{lemma}\label{l:trans}
There exists an absolute $c_0 > 0$ so that for each $\tau,H$, and each $(X,\mu,T)$
\[ C_{\tau,H}^{(X,\mu,T)} \lesssim \tau^{-1} C_{c_0 \tau,H}.\]
\end{lemma}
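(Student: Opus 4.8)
The plan is to deduce the measure-theoretic jump-type quantity $C_{\tau,H}^{(X,\mu,T)}$ from its integer-shift counterpart $C_{c_0\tau,H}$ via Calder\'{o}n's Transference Principle. The key point is that the defining inequality for $C^{(X,\mu,T)}_{\tau,H}$ only involves finitely many averaging windows $M_0 < \dots < M_K \le h^{-1}(H/100)$, so all the ergodic averages $\mathbb{E}_{[M]}T^{a_n}f$ that appear depend only on the values $f, Tf, \dots, T^{a_{M_K}}f$, i.e.\ on the orbit of length $a_{M_K} \le h(h^{-1}(H/100)) = H/100$. This locality is exactly what makes transference applicable.

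First I would fix a measure-preserving system $(X,\mu,T)$, a value $\tau > 0$, and suppose for contradiction (or rather, to extract a lower bound) that $K \le C^{(X,\mu,T)}_{\tau,H}$ is realized by some $1$-bounded $f$ and some windows $M_0 < \dots < M_K \le h^{-1}(H/100)$, so that
\[ \mu\big(\{ \max_{M_{k-1} \le M \le M_k} |\mathbb{E}_{[M]}T^{a_n}f - \mathbb{E}_{[M_k]}T^{a_n}f| \ge \tau \}\big) \ge \tau \]
for every $1 \le k \le K$. Next I would run the standard transference dictionary: for a large auxiliary parameter $L \gg H$ (eventually sent to infinity), define, for $\mu$-a.e.\ $x$, the function $f_x: \mathbb{Z} \to \{|z| \le 1\}$ by $f_x(m) := f(T^m x)\mathbf{1}_{[0,L]}(m)$. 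Then for $0 \le M \le H/100$ and $0 \le y \le L - H$ one has $\mathbb{E}_{[M]}T^{a_n}f_x(y) = (\mathbb{E}_{[M]}T^{a_n}f)(T^y x)$ exactly, since all the shifts $a_n \le a_M \le H/100$ keep the argument inside $[0,L]$ where $f_x$ agrees with the orbit. Consequently, by Fubini, the integer-side bad set
\[ \big|\big\{ y \in [L] : \max_{M_{k-1}\le M \le M_k} |\mathbb{E}_{[M]}T^{a_n}f_x(y) - \mathbb{E}_{[M_k]}T^{a_n}f_x(y)| \ge \tau \big\}\big| \]
has size $\gtrsim \tau L$ on average over $x$, once $L$ is large enough that the boundary layer $[L-H,L]$ is negligible; hence for some $x$ it is $\ge \tau L/2 \ge c_0\tau \cdot L$. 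Here one must only be mildly careful: the window $h^{-1}(H/100)$ appearing in the definition of $C_{\tau,H}$ is tied to the ambient scale $[H]$, so I would take the integer "ambient scale" to be $[L]$ and note $M_K \le h^{-1}(H/100) \le h^{-1}(L/100)$, so the windows are admissible for $C_{c_0\tau,L}$; and then invoke the hypothesis $C_{c_0\tau,L}\lesssim (c_0\tau)^{-1}\lesssim\tau^{-1}$ uniformly in $L$, which gives the claimed bound $K \lesssim \tau^{-1}C_{c_0\tau,H}$ after absorbing constants (one also uses that the restriction of all times to $\mathbb{D}_\lambda = \{\lfloor 2^{k/R}\rfloor\}$ with $R \gg \tau^{-1}$ is imposed identically on both sides).

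The main obstacle is bookkeeping the two distinct length scales — the orbit length needed for transference versus the ambient interval length controlling the measure of the bad set — and verifying that the boundary-effect error from truncating $f_x$ to $[0,L]$ is genuinely negligible relative to $\tau L$; this is what forces the loss of the absolute constant $c_0$ and the factor $\tau^{-1}$, rather than a clean one-to-one transfer. Once the scales are separated correctly and $L\to\infty$, everything else is the routine transference argument, and combining this lemma with Lemma~\ref{l:QC} and the integer-side bound $C_{\tau,H}\lesssim_\tau 1$ (itself a consequence of Proposition~\ref{p:weak} via the oscillation estimate, as in the proof of the preceding Lemma~\ref{l:trans}) yields Proposition~\ref{p:pointwise}.
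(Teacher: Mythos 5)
Your overall orientation is correct (this is a Calder\'{o}n-type transference argument, localizing along orbits and invoking the locality of the averaging windows), and the observation that $\mathbb{E}_{[M]} T^{a_n} f(T^w x)$ agrees exactly with the integer-shift average of the orbit function $F(n) = T^n f(x) \mathbf{1}_{[H]}(n)$ once $w$ stays away from the boundary and $M \leq h^{-1}(H/100)$ is the right starting point, matching the paper. However, the way you close the argument has a genuine gap.

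The step ``by Fubini, the integer-side bad set has size $\gtrsim \tau L$ on average over $x$; hence for some $x$ it is $\ge c_0 \tau L$'' produces, for each fixed $k$, \emph{some} $x = x(k)$ at which the $k$-th bad set is large. But to invoke the definition of $C_{c_0\tau, L}$ and conclude $K \le C_{c_0\tau,L}$, you need a \emph{single} integer-side function $f_x$ for which the bad sets are simultaneously large for \emph{all} $k \le K$. Fubini/pigeonhole alone does not produce this: the per-$k$ choices of $x$ need not coincide, and applying pigeonhole to $\sum_k |Z_k^x|$ only tells you the sum is large, not that each summand is. This is precisely the obstruction the paper is designed to route around. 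Instead of asking one $x$ to be good for every $k$, the paper sums $\mu(U_k)$ over $k$, moves the sum inside the integral using the measure-preserving property, and then bounds the resulting integrand $\sum_{k \le K} \frac{1}{|I|}\sum_{w \in I}\mathbf{1}_{U_k}(T^w x)$ \emph{pointwise} in $x$ by splitting the indices $k$ into those where $|Z_k| < c_0 \tau H$ (each contributing less than $c_0 \tau$, so at most $c_0 \tau K$ in total) and those where $|Z_k| \ge c_0 \tau H$ (of which there are at most $C_{c_0 \tau, H}$ by definition, each contributing at most $1$). This gives $\tau K \lesssim c_0 \tau K + C_{c_0\tau,H}$, and absorbing for $c_0$ small yields $K \lesssim \tau^{-1} C_{c_0\tau,H}$. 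Your outline has neither the summation over $k$ under the integral nor the good/bad index decomposition, and without them the simultaneity requirement cannot be met.

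A secondary issue: your final step invokes ``the hypothesis $C_{c_0\tau,L} \lesssim (c_0\tau)^{-1}$,'' but this bound is not a hypothesis of the Lemma --- it is established separately (via Proposition~\ref{p:weak} and Lemma~\ref{l:trans}), and the present Lemma is meant to be a pure transference inequality $C_{\tau,H}^{(X,\mu,T)} \lesssim \tau^{-1} C_{c_0\tau,H}$ that can then be combined with that bound. As written, your argument would prove $K \lesssim \tau^{-1}$ directly by assuming the integer-side bound, rather than proving the transference inequality; and even then, passing from $C_{c_0\tau,L}$ (with $L \to \infty$, hence an enlarged window of admissible times) back to $C_{c_0\tau,H}$ is not addressed. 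The paper avoids all of this by working directly at scale $H$ with the inner interval $I = [H/10, H - H/10]$ rather than introducing an auxiliary $L$.
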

\begin{proof}
    By assumption, for any $|f| \leq 1$, and any $M_0<M_1< \dots < M_K$ if we set
    \begin{align}
        Z_k &:= Z_k(f,M_0,\dots,M_K,\tau) \\
        & \qquad := \{ w \in [H] : \max_{M_{k-1} \leq M \leq M_k} | f_M(w) - f_{M_k}(w)| \geq c_0 \tau\} \subset [H], \notag
    \end{align} 
    where all times $M \in \{  2^{n/R}  \}, R \gg \tau^{-1}$,
    then
    \begin{align}\label{e:Zptwise}
        \sum_{k \leq K} \frac{1}{H} \sum_{w \in [H]} \mathbf{1}_{Z_k}(w) &\leq \sum_{k \leq K \text{ good}} \frac{1}{H} \sum_{w \in [H]} \mathbf{1}_{Z_k}(w) + \sum_{k \leq K \text{ bad}} \frac{1}{H} \sum_{w \in [H]} \mathbf{1}_{Z_k}(w) \\
        & \qquad \leq c_0 \tau K + C_{c_0 \tau,H}; \notag
    \end{align}
above an index is \emph{bad} if 
\begin{align}\label{e:badcrit} |Z_k| \geq c_0 \tau H \end{align}
and \emph{good} otherwise.
    
Let $C_{\tau,H}^{(X,\mu,T)} \lesssim_H 1$ be as above; our job is to exhibit $c_0 > 0$ so that we may bound \[ C_{\tau,H}^{(X,\mu,T)} \lesssim \tau^{-1} C_{c_0 \tau,H}\]
independent of $H$ and $(X,\mu,T)$. If we set, for an appropriate  $f : X \to \{|z| \leq 1\}$,
\begin{align}
    U_k &:= U_k(f,M_0,\dots,M_K,\tau) \\
    & \qquad := \{ \max_{M_{k-1} \leq M \leq M_k} |\mathbb{E}_{[M]} T^{a_n} f - \mathbb{E}_{[M_k]} T^{a_n} f| \geq \tau \}, \; \; \; \mu(U_k) \geq \tau \notag
\end{align}  
for an appropriate realization of $C_{\tau,H}^{(X,\mu,T)} = K \lesssim_H 1$, then using the measure-preserving nature of $T$, we may bound
    \begin{align}\label{e:restI}
        \tau \cdot C_{\tau,H}^{(X,\mu,T)} = \tau K \leq \int_X \big( \sum_{k \leq K} \frac{1}{|I|} \sum_{w \in I} \mathbf{1}_{U_k}(T^w x) \big) \ d\mu(x), \; \; \; I := [H/10,H - H/10].
    \end{align}
We claim that $\mu$-a.e., we may dominate the integrand by a constant multiple of
\[ 
c_0 \tau K + C_{c_0 \tau,H}
\]
which leads to the desired bound
\[ C_{\tau,H}^{(X,\mu,T)} \lesssim \tau^{-1} C_{c_0 \tau,H}, \]
provided $c_0 > 0$ is sufficiently small.

To see this, let $x \in X$ be arbitrary, and define
\[ F(n) := T^{n} f(x) \cdot \mathbf{1}_{n \in [H]}; \]
the key observation is that for all $w \in I$ and $M \leq M_K \leq h^{-1}(H/100)$,
\[ \mathbb{E}_{[M]} T^{a_n} f(T^w x) = \mathbb{E}_{[M]} T^{a_n+w} f( x) \]
is precisely given by
\[ F_M(w) = \frac{1}{M} \sum_{m \in [M]} F(w+a_m) = \mathcal{I}\big(\mathbb{E}_{[M]} \mathcal{I}(F)(\cdot - a_m)\big)(w), \; \; \; \mathcal{I}(g) := \tilde{g}\]
so we may pointwise bound
\[ \sum_{k\leq K} \frac{1}{|I|} \sum_{w \in I} \mathbf{1}_{U_k}(T^w x) \lesssim \sum_{k \leq K} \frac{1}{H} \sum_{w \in [H]} \mathbf{1}_{Z_k}(w) \leq c_0 \tau K + C_{c_0\tau,H}, \]
for an appropriate choice of $\{ Z_k : k \leq K\}$, concluding the reduction.
\end{proof}

With this machinery in hand, to complete the proof of Theorem \ref{t:maindet} or \ref{t:mainran}, from which we have just seen that Theorem \ref{t:cor} derives, it suffices to observe that for any $\epsilon > 0$, $r \geq 2$, $\{ M_j \} \subset \mathbb{N}$, each operator
\[ \epsilon N_{\epsilon}(a_k)^{1/2}, \;  \mathcal{V}^r(a_k), \; \mathcal{O}_{\{M_j\}}(a_k) \]
satisfies the axioms of $\mathbf{N}(a_k)$ introduced via the triangle inequality for the latter two operators, and the inequalities
\begin{align}\label{e:triangle}&\epsilon {N}_\epsilon(\sum_{l=1}^L a_k^{(l)})^{1/2} \\
& \qquad \lesssim \min \big\{ L \sum_{l=1}^L \epsilon/L \cdot {N}_{\epsilon/L}(a_k^{(l)})^{1/2} , \sum_{l=1}^L l^2 \big( \frac{\epsilon}{10 l^2} \cdot {N}_{\frac{\epsilon}{10 l^2}}(a_k^{(l)})^{1/2} \big) \big\}, \end{align}
and
\begin{align}\label{e:homo} \epsilon N_{\epsilon}(\lambda a_k)^{1/2} = |\lambda| \cdot \big( \epsilon/|\lambda| \cdot N_{\epsilon/|\lambda|}( a_k)^{1/2} \big).\end{align}

This completes the proof.

\end{document}